\newtheorem{theorem}{Theorem}[section]
\newtheorem*{theorem-non}{Theorem}
\newtheorem{lemma}[theorem]{Lemma}
\newtheorem{proposition}[theorem]{Proposition}
\newtheorem{corollary}[theorem]{Corollary}
\theoremstyle{definition}
\newtheorem{definition}[theorem]{Definition}
\newtheorem{remark}[theorem]{Remark}
\newcommand{\CC}{\ensuremath{\mathbb{C}}}
\newcommand{\PP}{\ensuremath{\mathbb{P}}}
\newcommand{\RR}{\ensuremath{\mathbb{R}}} 
\newcommand{\ZZ}{\ensuremath{\mathbb{Z}}}
\newcommand{\cE}{\mathcal{E}}
\newcommand{\cM}{\mathcal{M}}
\newcommand{\cO}{\mathcal{O}}
\newcommand{\cU}{\mathcal{U}}
\newcommand{\cZ}{\mathcal{Z}}
\newcommand{\fS}{\mathfrak{S}}
\DeclareMathOperator{\ch}{ch}
\DeclareMathOperator{\Coh}{Coh}
\DeclareMathOperator{\coker}{coker}
\DeclareMathOperator{\Ext}{Ext}
\DeclareMathOperator{\Hom}{Hom}
\renewcommand{\Im}{\mathrm{Im}}
\DeclareMathOperator{\NS}{NS}
\DeclareMathOperator{\pt}{pt}
\DeclareMathOperator{\rk}{rk}
\DeclareMathOperator{\Stab}{Stab}
\newcommand\da{\widehat{a}}
\DeclareMathOperator\IZ{\mathcal{I}_{\mathcal{Z}}}
\DeclareMathOperator\Db{D^b}
\newcommand\An{A^{[n+1]}}
\newcommand\dA{\widehat{A}}
\newcommand\ddA{\skew{5.5}\widehat{\widehat{A}}}
\DeclareMathOperator\Pb{\mathcal{P}}
\DeclareMathOperator{\SStab}{SStab}
\newcommand{\KA}{K_n(A)}
\title{Stable vector bundles on generalized Kummer varieties}
\author{Fabian Reede}
\address{Institut f\"ur Algebraische Geometrie, Leibniz Universit\"at Hannover, Welfengarten 1, 30167 Hannover, Germany}
\email{reede@math.uni-hannover.de}
\author{Ziyu Zhang}
\address{Institute of Mathematical Sciences, ShanghaiTech University, 393 Middle Huaxia Road, 201210 Shanghai, P.R.China}
\email{zhangziyu@shanghaitech.edu.cn}
\keywords{stable sheaves, moduli spaces, generalized Kummer varieties}
\subjclass[2020]{Primary: 14J60; Secondary: 14F08, 14D20, 53C26}
\begin{document}

\begin{abstract}
	For an abelian surface $A$, we explicitly construct two new families of stable vector bundles on the generalized Kummer variety  $\KA$ for $n\geqslant 2$. The first is the family of tautological bundles associated to stable bundles on $A$, and the second is the family of the ``wrong-way'' fibers of a universal family of stable bundles on the dual abelian surface $\widehat{A}$ parametrized by $\KA$. Each family exhibits a smooth connected component in the moduli space of stable bundles on $\KA$, which is holomorphic symplectic but not simply connected, contrary to the case of K3 surfaces.
\end{abstract}

\maketitle

\section*{Introduction}

\subsection*{Background}

Irreducible holomorphic symplectic manifolds are a type of building blocks in the classification of compact K\"ahler manifolds with trivial first Chern class. In the very influential paper \cite{beau}, Beauville constructed two classes of irreducible holomorphic symplectic manifolds, which are the Hilbert schemes $X^{[n]}$ of $n$-points on K3 surfaces $X$, and the generalized Kummer varieties $K_n(A)$ associated to abelian surfaces $A$, obtained as the zero fibers of the summation map $\Sigma: A^{[n+1]} \rightarrow A$. The second construction was later generalized by Yoshioka in \cite{yosh}, in which he proved that the fibers $K_H(v)$ of the Albanese morphism $\mathfrak{a}_v : M_H(v) \rightarrow A\times \dA$ for moduli spaces $M_H(v)$ of $\mu_H$-stable sheaves on $A$ with Mukai vector $v$ are deformation equivalent to generalized Kummer varieties.

\subsection*{Main results}

The present manuscript is a continuation of the authors' work \cite{rz,rz2} on the construction of new stable sheaves on irreducible holomorphic symplectic manifolds. The same problem was also studied by various authors, such as in \cite{stapleton, wandel,wray}. Recently, Markman \cite{markman} and O'Grady \cite{ogrady} also found examples of stable bundles among modular sheaves on K3$^{[n]}$'s. The main purpose of this manuscript is to construct new stable vector bundles on generalized Kummer varieties and study some of their properties. We achieved two different constructions.

A natural family of vector bundles on $K_n(A)$ for considering stability are the so-called tautological bundles. In \cite{wandel} Wandel constructed some examples of tautological bundles on $K_n(A)$ for $n=1,2$. Following an idea of Stapleton \cite{stapleton}, we generalize Wandel's results by proving that in fact all tautological bundles on $K_n(A)$ for $n\geqslant 2$ are stable with respect to suitable ample classes. Moreover, under suitable numerical assumptions, we show that the tautological bundles form a connected component of the moduli space of stable bundles on $K_n(A)$. This in particular indicates that smooth components of moduli spaces of sheaves on generalized Kummer varieties need not be irreducible holomorphic symplectic manifolds, contrary to the result for K3 surfaces, see \cite[Theorem 10.3.10]{huy2}.

For another family of vector bundles on $K_n(A)$, we use the standard Fourier-Mukai transform to construct a fine moduli space $M_{\widehat{H}}(w)$ of stable vector bundles of rank $r\geqslant n+2$ on the dual abelian variety $\dA$ for some suitable choice of the Mukai vector $w$, such that $M_{\widehat{H}}(w) \cong A^{[n+1]} \times \widehat{A}$. Then $K_n(A)$ is naturally isomorphic to the zero fiber of the Albanese morphism $\mathfrak{a}_w: M_{\widehat{H}}(w) \to A \times \widehat{A}$. Let $\cU$ be the restriction of the universal family on $\widehat{A} \times M_{\widehat{H}}(w)$ to the closed subscheme $\widehat{A} \times K_n(A)$. For each closed point $\da\in \dA$, the further restriction of $\cU$ to the slice $\{ \da \} \times K_n(A)$ gives a vector bundle $\mathcal{U}_{\da}$ on $K_n(A)$. Following our approach in \cite{rz2}, we show that each $\cU_{\da}$ is a stable bundle on $K_n(A)$, and hence we obtain a family of stable bundles on $K_n(A)$ parametrized by $\dA$.

Our main results can be summarized as follows:

\begin{theorem-non}
	Let $(A, H)$ be a polarized abelian surface, and let $(\dA, \widehat{H})$ be its dual.
	\begin{enumerate}
		\item (Theorem \ref{thm:firstmain}) Let $E$ be a $\mu_H$-stable vector bundle of class $v$ on $A$ with $E\neq \mathcal{O}_A$. Then the tautological bundle $E^{(n)}$ is a $\mu_D$-stable vector bundle on $K_n(A)$ with respect to some ample divisor $D$ on $K_n(A)$. Moreover, under suitable numerical assumptions on $v$, the moduli space $M_H(v)$ of $\mu_H$-stable vector bundles of class $v$ on $A$ can be embedded as a connected component of some moduli space of $\mu_D$-stable vector bundles on $K_n(A)$.
		\item (Theorem \ref{thm:component1}) Let $\cU$ be the restriction of the universal vector bundle on $\dA \times M_{\widehat{H}}(w)$ to the closed subscheme $\dA \times K_n(A)$ as described above. Then for each closed point $\da \in \dA$, the fiber $\cU_{\da}$ is a $\mu_D$-stable bundle on $K_n(A)$ with respect to an  ample divisor $D$. Moreover, $\dA$ can be embedded as a connected component of a moduli space of $\mu_D$-stable vector bundles on $K_n(A)$.
	\end{enumerate}
\end{theorem-non}

\subsection*{Sketch of proof}

Let us give a quick overview on how we achieved the above results. Although the setup in both cases looks very different, we will follow a similar chain of ideas to prove the slope stability of the bundles $E^{(n)}$ (resp. $\cU_{\da}$) with respect to some ample divisor $D$ on $K_n(A)$. The proof consists of the following three major steps.

\textsc{Step 1.} To begin with, let $P_n(A)$ be the codimension $2$ subvariety of $A^{n+1}$ parametrizing $(n+1)$-tuples whose components add up to zero under the group law of $A$. Each bundle $E^{(n)}$ (resp. $\cU_{\da}$) defines uniquely a reflexive sheaf on $P_n(A)$. We adapt the technique developed by Stapleton in \cite{stapleton} to prove the slope stability of $E^{(n)}$ (resp. $\cU_{\da}$) with respect to a natural nef divisor $H_K$ on $K_n(A)$ by showing that the corresponding reflexive sheaf cannot be destabilized by any $\mathfrak{S}_{n+1}$-invariant subsheaf on $P_n(A)$. (See Propositions \ref{thm:boundary-stable} and \ref{prop:stableEx}.)

\textsc{Step 2.} In order to show the slope stability of $E^{(n)}$ (resp. $\cU_{\da}$) with respect to an ample divisor on $K_n(A)$, we use the openness of stability to perturb $H_K$ to a nearby ample divisor $D$. This perturbation argument was developed in \cite{greb16,stapleton}, and generalized in \cite{rz2}. The main difficulty for our application is to show the existence of the ample divisor $D$ independent of the choice of $E$ in its own moduli $M_H(v)$ (resp. the choice of the fiber $\cU_{\da}$ in the family $\cU$ parametrized by $\dA$). (See Propositions \ref{prop:sameH2} and \ref{prop:sameH1}.)

\textsc{Step 3.} Finally, in order to identify $M_H(v)$ (resp. $\dA$) as a smooth connected component of the moduli space of $\mu_D$-stable sheaves on $K_n(A)$, we interpret $E^{(n)}$ (resp. $\cU_{\da}$) as the image of $E$ (resp. a line bundle on $A$) under an integral functor $\Theta$ induced by the structure sheaf (resp. the ideal sheaf) of the universal subscheme for $K_n(A)$. By a result of Meachan \cite{meac}, we can apply the technique of $\PP$-functors invented in \cite{add,add16-2} to compute the relevant cohomology groups, which leads to our conclusion. (See Theorems \ref{thm:firstmain} and \ref{thm:component1}.)

\subsection*{Structure of text}The text is organized in two sections, which respectively deal with the two cases mentioned above. All objects are defined over the field of complex numbers $\mathbb{C}$.

\subsection*{Acknowledgement} We thank the referee for carefully reading a previous version of the manuscript and several suggestions for improvement.

\section{Tautological Bundles}

\subsection{Notations}

For any integer $n \geqslant 2$, let $A$ be an abelian surface, let $\KA$ the generalized Kummer variety of dimension $2n$, and let $\cZ \subset A \times \KA$ the corresponding universal family. The projections from $\cZ$ to the two factors $A$ and $\KA$ are denoted by $p$ and $q$ respectively. The following diagram exhibits the relations of some relevant schemes:
\begin{equation}
	\label{eqn:Notations}
	\begin{tikzcd}
		P_n(A)_\circ \ar[r, "\sigma_\circ"] \ar[d, hook, "j_P"'] & S_n(A)_\circ \ar[d, hook, "j_S"'] & \KA_\circ \ar[l, "h_\circ"'] \ar[d, hook, "j_K"'] \\
		P_n(A) \ar[r, "\sigma"] \ar[d, hook, "\tau"'] & S_n(A) \ar[d, hook] & \KA \ar[l, "h"'] \ar[d, hook, "\iota"'] \\
		A^{n+1} \ar[r, "\overline{\sigma}"] & A^{(n+1)} & A^{[n+1]} \ar[l, "\overline{h}"']
	\end{tikzcd}
\end{equation}

Each vertical arrow in the lower half of the diagram is the embedding of a zero fiber of the addition morphism to $A$; $\sigma$ and $\overline{\sigma}$ are quotients by the symmetric group $\fS_{n+1}$; $h$ and $\overline{h}$ are Hilbert-Chow morphisms. Moreover, we denote the projections from $A^{n+1}$ to each individual factor by $q_0, q_1, \cdots, q_n$.

Each vertical arrow in the upper half of the diagram is the embedding of an open subscheme parametrizing $n+1$ distinct (ordered or unordered) points in $A$. It is clear that the complement of each of these embeddings is a closed subscheme of codimension $2$. The morphisms $\sigma_\circ$ and $h_\circ$ are restrictions of the morphisms in the second row. Clearly $\sigma_\circ$ is a free $\fS_{n+1}$-quotient and $h_\circ$ is an isomorphism.

Let $H$ be an ample divisor on $A$. For each $0 \leqslant i \leqslant n$, we define $h_i = \tau^\ast q_i^\ast H$. Then $H_P=\sum_{i=0}^n h_i$ on $P_n(A)$ is an $\fS_{n+1}$-invariant ample divisor on $P_n(A)$, which descends to an ample divisor $H_S$ on $S_n(A)$, whose pullback $H_K = h^\ast (H_S)$ is a big and nef divisor on $K_n(A)$. For any $E \in \Coh(A)$, the corresponding \emph{tautological sheaf} $E^{(n)}$ on $\KA$ is defined by $E^{(n)} = q_\ast p^\ast E$. Moreover, we write $E_i = \tau^\ast q_i^\ast E$ for each $0 \leqslant i \leqslant n$. The goal of this section is to show that if $E$ is a non-trivial $\mu_H$-stable vector bundle on $A$, then $E^{(n)}$ is slope stable with respect to some ample divisor sufficiently close to $H_K$. Our approach will mainly follow the idea in \cite{stapleton}.

\subsection{Pullback of stable bundles}

We aim to prove Proposition \ref{prop:StableE}, which is an analogue of \cite[Proposition 4.7]{stapleton} in the Kummer case. We first collect necessary notations and tools required in the course of the proof, following \cite[\S 4]{stapleton}.

For any normal projective variety $X$, let $\gamma \in N_1(X)_\RR$ be a curve class and $\cE \in \Coh(X)$ a torsion-free sheaf. The slope of $\cE$ with respect to $\gamma$ is defined by
$$ \mu^\gamma(\cE) = \frac{c_1(\cE) \cdot \gamma}{\rk \cE}. $$
It is clear that the slope is linear with respect to $\gamma$, and the usual notion of slope $\mu_H(\cE) = \mu^{H^{d-1}}(\cE)$ for any ample class $H \in N^1(X)_\RR$, where $n=\dim X$. The new notion of slope defines a \emph{slope stability} (resp. \emph{semistability}) of $\cE$ with respect to a curve class $\gamma$, by requiring any torsion-free quotient of $\cE$ of a smaller rank to have a smaller (resp. smaller or equal) slope with respect to $\gamma$.

The main advantage of studying slope stability with respect to curve classes is the linearity of slopes with respect to the curve parameter. More precisely, we have the following lemma.

\begin{lemma}[{\cite[Lemma 4.4]{stapleton}}]
	\label{lem:additive-stable}
	Let $\gamma, \delta \in N_1(X)_\RR$ such that $\cE$ is semistable with respect to $\gamma$ and stable with respect to $\delta$. Then $\cE$ is stable with respect to $a\gamma + b\delta$ for any $a,b>0$. \qed
\end{lemma}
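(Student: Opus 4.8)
The plan is to reduce the statement to the linearity of the assignment $\gamma \mapsto \mu^{\gamma}(-)$ on $N_1(X)_\RR$, together with the elementary fact that a strict inequality combined with a non-strict one via strictly positive weights remains strict. So this is really a formal consequence of the set-up rather than a result requiring any geometry.

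First I would unwind the definition of slope stability with respect to a curve class: to prove that $\cE$ is stable with respect to $a\gamma+b\delta$, it suffices to check that every torsion-free quotient $\cQ$ of $\cE$ with $0 < \rk\cQ < \rk\cE$ satisfies $\mu^{a\gamma+b\delta}(\cQ) < \mu^{a\gamma+b\delta}(\cE)$. Fix such a $\cQ$. From the semistability of $\cE$ with respect to $\gamma$ we get $\mu^{\gamma}(\cQ) \leqslant \mu^{\gamma}(\cE)$, and from the stability of $\cE$ with respect to $\delta$ we get $\mu^{\delta}(\cQ) < \mu^{\delta}(\cE)$.

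Next I would record the additivity $\mu^{a\gamma+b\delta}(\cF) = a\,\mu^{\gamma}(\cF) + b\,\mu^{\delta}(\cF)$ for any torsion-free coherent sheaf $\cF$, which follows immediately from $c_1(\cF)\cdot(a\gamma+b\delta) = a\,(c_1(\cF)\cdot\gamma) + b\,(c_1(\cF)\cdot\delta)$ after dividing by $\rk\cF$. Multiplying the first inequality above by $a>0$ and the second by $b>0$ and adding them yields $\mu^{a\gamma+b\delta}(\cQ) < \mu^{a\gamma+b\delta}(\cE)$, as desired: the $\gamma$-part contributes a non-strict inequality, the $\delta$-part a strict one, and the sum is strict precisely because $b>0$. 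This is exactly the stability of $\cE$ with respect to $a\gamma+b\delta$.

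There is no serious obstacle in this argument; the only points deserving a moment of attention are that destabilizing quotients are tested among \emph{nonzero} torsion-free quotients of rank strictly less than $\rk\cE$, so that all the slopes occurring are well-defined, and that the strictness is genuinely inherited by the positive combination — the hypothesis $a>0$ alone would only give semistability of $\cE$ with respect to $a\gamma+b\delta$.
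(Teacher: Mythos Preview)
Your proof is correct and is exactly the elementary linearity argument one expects; the paper itself does not give a proof but simply cites \cite[Lemma 4.4]{stapleton} (note the \texttt{\textbackslash qed} immediately after the statement). There is nothing to compare approaches against here, and your write-up matches the paper's stated definition of stability via torsion-free quotients of smaller rank.
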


Our main tool for determining the slope stability is the following observation

\begin{lemma}[{\cite[Corollary 4.6]{stapleton}}]
	\label{lem:slope-stable}
	Let $\pi \colon C_T \to T$ be a smooth family of irreducible closed curves in $X$ with class $\gamma$. Suppose that $\cE$ is a vector bundle on $X$ such that $\cE|_{C_t}$ is stable for all $t \in T$, and that the curves in $C_T$ are dense in $X$. Then $\cE$ is stable with respect to the curve class $\gamma$. Moreover, the statement also holds if stability is replaced by semistability.
	\qed
\end{lemma}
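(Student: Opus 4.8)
The plan is to verify slope stability straight from the definition. Let $\cE \twoheadrightarrow \cG$ be a torsion-free quotient on $X$ with $0 < \rk\cG < \rk\cE$; write $r = \rk\cE$ and $r' = \rk\cG$, so that the goal is to show $\mu^\gamma(\cG) > \mu^\gamma(\cE)$, a statement about $c_1$ and ranks only, which I will read off from a single well-chosen member of the family. Let $Z \subset X$ be the proper closed subset on which $\cG$ fails to be locally free. Since the curves $C_t$ are dense in $X$ they cannot all lie in $Z$, so a general $t \in T$ has $C_t \not\subseteq Z$ (while $[C_t] = \gamma$ for every $t$). For such a $t$ the generic point of $C_t$ lies outside $Z$, hence $\cG|_{C_t}$ is a coherent sheaf of generic rank $r'$ on the smooth curve $C_t$; let $\overline{\cG}$ denote its maximal torsion-free --- hence locally free --- quotient, of rank $r'$. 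Since restriction to $C_t$ is right exact, the composite $\cE|_{C_t} \twoheadrightarrow \cG|_{C_t} \twoheadrightarrow \overline{\cG}$ exhibits $\overline{\cG}$ as a quotient bundle of $\cE|_{C_t}$ of intermediate rank $0 < r' < r$.

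Next comes the stability comparison on the curve: by hypothesis $\cE|_{C_t}$ is stable, so $\mu(\overline{\cG}) > \mu(\cE|_{C_t})$. To finish I would translate degrees into intersection numbers. Since $\cE$ is locally free, $\deg(\cE|_{C_t}) = c_1(\cE)\cdot\gamma$; and since $c_1$ commutes with restriction, $\deg(\cG|_{C_t}) = c_1(\cG)\cdot\gamma$, so that $\deg(\overline{\cG}) = c_1(\cG)\cdot\gamma - \ell$, where $\ell \ge 0$ is the length of the torsion subsheaf of $\cG|_{C_t}$. Dividing the inequality $\deg(\overline{\cG})/r' > \deg(\cE|_{C_t})/r$ by the respective ranks and using $\deg(\overline{\cG}) \le c_1(\cG)\cdot\gamma$ yields
\[
\mu^\gamma(\cG) = \frac{c_1(\cG)\cdot\gamma}{r'} \ge \frac{\deg(\overline{\cG})}{r'} = \mu(\overline{\cG}) > \mu(\cE|_{C_t}) = \frac{c_1(\cE)\cdot\gamma}{r} = \mu^\gamma(\cE),
\]
which is exactly what we wanted. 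The semistable assertion follows from the identical chain with every ``$>$'' relaxed to ``$\ge$'', using only that $\cE|_{C_t}$ is semistable for general $t$.

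The one step calling for care --- and the only real obstacle --- is the bookkeeping with the torsion of $\cG|_{C_t}$. A general member of a dense family need not literally avoid the codimension$\,\ge 2$ locus $Z$ (a pencil of curves through a fixed point is the cautionary example), so $\cG|_{C_t}$ may genuinely acquire torsion along $C_t \cap Z$; the remedy is the observation already used, namely that passing to the torsion-free quotient $\overline{\cG}$ can only lower the degree, so that $c_1(\cG)\cdot\gamma \ge \deg(\overline{\cG})$ keeps the inequality pointing in the right direction. With that understood, the rest is the standard device of establishing stability by testing it against a family of curves that sweeps out $X$, and is routine.
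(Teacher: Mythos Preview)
The paper does not prove this lemma itself; the statement is simply cited from \cite[Corollary 4.6]{stapleton} and closed with a \qed. Your argument supplies the expected details and has the right shape, but there is one genuine gap.

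The step ``since $c_1$ commutes with restriction, $\deg(\cG|_{C_t}) = c_1(\cG)\cdot\gamma$'' is not valid for a merely torsion-free $\cG$ when $C_t$ meets $Z=\mathrm{Sing}(\cG)$. Functoriality of Chern classes gives $i^\ast c_1(\cG) = c_1(Li^\ast\cG)$ for the \emph{derived} pullback, so in general
\[
c_1(\cG)\cdot [C_t] \;=\; \deg(\cG|_{C_t}) - \operatorname{length}\mathrm{Tor}_1(\cG,\cO_{C_t}) + \operatorname{length}\mathrm{Tor}_2(\cG,\cO_{C_t}) - \cdots,
\]
and the higher Tor terms need not vanish. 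For instance, with $X=\PP^3$, $\cG=\cI_p$, and $C$ a line through $p$, one has $c_1(\cG)\cdot[C]=0$ while $\deg(\cG|_C)=1$; the torsion of $\cG|_C$ has length $2$ and $\deg(\overline{\cG|_C})=-1$. Your displayed chain still ends at a true inequality here, but the intermediate equality you rely on is false --- and, as you yourself warn, one cannot assume a general $C_t$ avoids $Z$.

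The inequality you actually need, $c_1(\cG)\cdot\gamma \ge \deg(\overline{\cG|_{C_t}})$, does hold, but by a different mechanism. Set $\cK=\ker(\cE\to\cG)$ of rank $s$. On the open set $X\setminus Z$ (where $\cK$ is a subbundle of $\cE$) one has $\det\cK \hookrightarrow \wedge^{s}\cE$, and since $\det\cK$ and $\wedge^s\cE$ are locally free and $Z$ has codimension $\ge 2$, this extends to a nonzero map on all of $X$. Restricting to $C_t$ (which is not contained in $Z$) gives a nonzero, hence injective, map of line bundles $\det\cK|_{C_t}\hookrightarrow\wedge^{s}(\cE|_{C_t})$, whose saturation is $\det(\cK'^{\mathrm{sat}})$, where $\cK'^{\mathrm{sat}}$ is the saturation in $\cE|_{C_t}$ of the image of $\cK|_{C_t}$. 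Thus $c_1(\cK)\cdot\gamma=\deg(\det\cK|_{C_t})\le\deg(\cK'^{\mathrm{sat}})$; subtracting from $c_1(\cE)\cdot\gamma=\deg(\cE|_{C_t})$ and using $\overline{\cG|_{C_t}}=\cE|_{C_t}/\cK'^{\mathrm{sat}}$ yields the inequality you wanted. With this patch in place, the rest of your argument (including the semistable variant) goes through unchanged.
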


The following immediate implication will be convenient for our application

\begin{corollary}
	\label{cor:high-dim-fiber}
	Let $\pi \colon V_T \to T$ be a smooth projective family of irreducible closed subvarieties of $X$ such that the image of the natural morphism $\psi \colon V_T \to X$ is dense. Let $\cO_\pi(1)$ be a relatively sufficiently ample divisor class, and $\gamma$ a curve class in fibers given by the complete intersection of $\cO_\pi(1)$ in fibers. For any vector bundle $E$ on $X$, and assume $(\psi^\ast E) \vert_{V_t}$ is $\mu$-stable (resp. $\mu$-semistable) for all $t \in T$ with respect to $\cO_\pi(1)$, then $E$ is stable (resp. semistable) on $X$ with respect to $\psi_\ast \gamma$.
\end{corollary}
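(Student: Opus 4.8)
The plan is to reduce the statement about higher-dimensional fibers to the curve case already recorded in Lemma \ref{lem:slope-stable}. First I would choose, for a very general point $t \in T$, a complete intersection curve $C_t \subset V_t$ cut out by $\dim V_t - 1$ sufficiently general members of $|\cO_\pi(1)|_{V_t}|$. By Mehta--Ramanathan, since $(\psi^\ast E)|_{V_t}$ is $\mu$-stable (resp. $\mu$-semistable) with respect to $\cO_\pi(1)$, its restriction to such a general complete intersection curve of sufficiently high degree remains $\mu$-stable (resp. semistable); note that $(\psi^\ast E)|_{C_t}$ is just $E|_{\psi(C_t)}$ pulled back along the (finite, in fact generically injective) map $C_t \to X$, so stability is preserved. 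Doing this in families — i.e. spreading out the complete-intersection construction over an open dense $T^\circ \subset T$ — produces a smooth family $\pi' \colon C_{T^\circ} \to T^\circ$ of irreducible curves in $V_{T^\circ}$, all of class equal to the fixed complete intersection class $\gamma$ in the fibers.

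Next I would push this family forward to $X$ via $\psi$. Since $\psi|_{V_{T^\circ}}$ has dense image and the curves $C_t$ sweep out a dense subset of each $V_t$ (a general complete intersection in a projective variety passes through a general point), the images $\psi(C_t)$ are dense in $X$. Shrinking $T^\circ$ further if necessary to ensure $\psi$ is an immersion on each $C_t$ (possible by generic smoothness, so that $\psi(C_t)$ is again a smooth irreducible curve and $E|_{\psi(C_t)} \cong (\psi^\ast E)|_{C_t}$ stays stable), we obtain a smooth family of irreducible closed curves in $X$, of the single class $\psi_\ast \gamma$, covering a dense subset, on each of which $E$ restricts to a $\mu$-stable (resp. semistable) bundle. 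Lemma \ref{lem:slope-stable} then applies verbatim and yields that $E$ is stable (resp. semistable) on $X$ with respect to $\psi_\ast \gamma$.

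The step I expect to be the main obstacle is making the relative complete-intersection construction and the Mehta--Ramanathan restriction theorem work uniformly in $t$: one needs the degree of the complete intersection to be bounded independently of $t$ so that a single curve class $\gamma$ works across the family, and one needs the ``general'' complete intersection curve to vary in an algebraic family rather than merely existing pointwise. This is handled by noting that $\cO_\pi(1)$ is a fixed relative polarization, so the relevant Hilbert polynomials and hence the Mehta--Ramanathan bound are constant on the connected base $T$; one then works over the (open dense) locus in the relative flag of complete intersections where the curve is smooth, irreducible, and the restricted bundle is stable, which is open by openness of stability and the semicontinuity of the relevant cohomology. Everything else — density, irreducibility of fibers, the identification $(\psi^\ast E)|_{C_t} = E|_{\psi(C_t)}$ — is routine once this family is in place.
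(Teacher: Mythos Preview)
Your proposal is correct and follows essentially the same route as the paper: reduce to curves via Mehta--Ramanathan on each fiber $V_t$ with respect to the fixed relative polarization $\cO_\pi(1)$, then invoke Lemma~\ref{lem:slope-stable} on the resulting dense family of curves in $X$. One small simplification: since by hypothesis each $V_t$ is already a \emph{closed subvariety} of $X$, the restriction $\psi|_{V_t}$ is a closed embedding, so your extra step of shrinking $T^\circ$ to make $\psi$ an immersion on $C_t$ is unnecessary---the curves $C_t$ sit honestly inside $X$ from the start.
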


\begin{proof}
	Assume the relative dimension of $\pi$ is $d$. Since $\cO_\pi(1)$ is relatively sufficiently ample, whose $(d-1)$-fold self-intersection in each fiber $V_t$ gives the curve class $\gamma$, a general curve $C$ in the class $\gamma$ is smooth. By assumption $(\psi^\ast E) \vert_{V_t}$ is stable (resp. semistable) with respect to $\cO_\pi(1)$, hence $(\pi^\ast E) \vert_C$ is also stable (resp. semistable) for a general curve $C$ in the class $\gamma$, by the Theorem of Mehta and Ramanathan \cite[Theorems 7.2.1, 7.2.8]{huy2}. By assumption the image of such curves is dense in $X$, thus we conclude that $E$ is stable (resp. semistable) by Lemma \ref{lem:slope-stable}.
\end{proof}

We study two special types of curve classes in $P_n(A)$.

Consider the curve class 
\begin{equation}
	\label{eqn:gamma}
	\gamma = h_0^0 h_1^1 h_2^2 \cdots h_n^2 / (H^2)^{n-1} = h_1 (\tau^\ast q_2^\ast [\pt]) \cdots (\tau^\ast q_n^\ast [\pt]).
\end{equation}
It is clear that $\gamma$ is an integral fiber class for the projection $Q_1 \coloneqq q_2 \times \cdots \times q_n$. For each closed point $a \coloneqq (a_2, \cdots, a_n) \in A^{n-2}$, the fiber $$F_a \coloneqq Q_1^{-1}(a) \cong \{ (a_0, a_1) \mid a_0 + a_1 = - (a_2 + \cdots + a_n) \}$$ is the graph of an automorphism of $A$ given as a composition of the antipodal morphism and a translation, hence the projections to both factors $q'_0, q'_1 \colon F_a \to A$ are isomorphisms. Let $i_a \colon F_a \to P_n(A)$ be the inclusion of the fiber, then we have $\gamma = (i_a)_\ast (q'_1)^\ast H$, where we regard $(q'_1)^\ast H$ as a curve class on $F_a$.

\begin{lemma}
	\label{lem:gamma-class}
	For any $\mu_H$-stable (resp. $\mu_H$-semistable) vector bundle $E$ on $A$, $E_i$ is stable (resp. semistable) with respect to $\gamma$ for $i = 0$ or $1$, and semistable with respect to $\gamma$ for each $2 \leqslant i \leqslant n$.
\end{lemma}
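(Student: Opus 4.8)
The plan is to apply Corollary~\ref{cor:high-dim-fiber} to the projection $Q_1$, with $X = V_T = P_n(A)$ and $\psi = \id$. First I would observe that $Q_1$ is a smooth projective family: under the isomorphism $P_n(A) \cong A^n$ sending $(a_0, \ldots, a_n)$ to $(a_1, \ldots, a_n)$ it becomes a coordinate projection, and its fibres are the abelian surfaces $F_a$. The main step would then be to identify the restriction of each $E_i$ to a fibre $F_a$: the composition $q_i \circ \tau \circ i_a \colon F_a \to A$ equals the isomorphism $q'_i$ when $i = 0$ or $1$, so $E_i|_{F_a} \cong (q'_i)^\ast E$ is the pullback of $E$ along an isomorphism; and for $2 \leqslant i \leqslant n$ it is the constant map with value $a_i$, so $E_i|_{F_a} \cong \cO_{F_a}^{\oplus \rk E}$ is trivial, hence $\mu$-semistable (indeed $\mu$-polystable) with respect to any ample class on $F_a$.

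Next I would check the fibrewise (semi)stability of $E_0|_{F_a}$ and $E_1|_{F_a}$ with respect to the class $h_1$, which is relatively ample for $Q_1$ and satisfies $h_1|_{F_a} = (q'_1)^\ast H$. Transporting $(q'_1)^\ast H$ to $A$ along the isomorphisms $q'_0, q'_1 \colon F_a \to A$ produces $H$ itself for $q'_1$, and for $q'_0$ the pullback of $H$ under the composition of the antipodal morphism with a translation; since both of these act trivially on $\NS(A)$, the class obtained is numerically equivalent to $H$. Because $\mu$-(semi)stability on a surface depends only on the numerical class of the polarization, the hypothesis that $E$ is $\mu_H$-stable (resp.\ $\mu_H$-semistable) gives that $E_0|_{F_a}$ and $E_1|_{F_a}$ are $\mu$-stable (resp.\ $\mu$-semistable) with respect to $h_1|_{F_a}$, for every $a$.

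In the last step I would replace $h_1$ by a sufficiently large multiple $m h_1$, so that the general complete intersection curve in each fibre is smooth, and apply Corollary~\ref{cor:high-dim-fiber}: it yields that $E_i$ is stable (resp.\ semistable) on $P_n(A)$ with respect to the pushforward of the fibrewise complete intersection class, which here is $m\gamma$, for $i = 0, 1$, and semistable with respect to $m\gamma$ for $2 \leqslant i \leqslant n$. Since slope (semi)stability is unaffected by rescaling the curve class, the same conclusions hold with respect to $\gamma$. I expect the only step needing genuine care to be the second one: verifying that the fibrewise polarization becomes numerically equivalent to $H$ after transport to $A$, which is exactly the observation that each $F_a$ is the graph of an automorphism of $A$ of that particular shape. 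Everything else is formal, or is absorbed into Corollary~\ref{cor:high-dim-fiber}.
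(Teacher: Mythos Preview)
Your proposal is correct and follows essentially the same approach as the paper: both restrict $E_i$ to the fibres $F_a$ of $Q_1$, identify the restrictions as $(q'_i)^\ast E$ (or trivial for $i\geqslant 2$), and invoke Corollary~\ref{cor:high-dim-fiber}. Your version is in fact slightly more explicit than the paper at the one delicate point: for $i=0$ you spell out that the fibrewise polarization $(q'_1)^\ast H$ transports via $q'_0$ to a class numerically equivalent to $H$ (because the antipodal map and translations act trivially on $\NS(A)$), whereas the paper simply asserts stability with respect to $(q'_i)^\ast H$ and leaves this numerical equivalence implicit.
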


\begin{proof}
	If $i = 0$ or $1$, since $q'_i = q_i \circ \tau \circ i_a$, we have $E_i \vert_{F_a} = (q'_i)^\ast E$. Since $E$ is $\mu_H$-stable (resp. $\mu_H$-semistable) and $q'_i$ is an isomorphism, $E_i \vert_{F_a}$ is stable (resp. semistable) with respect to the curve class $(q'_i)^\ast H$. It follows by Corollary \ref{cor:high-dim-fiber} that $E_i$ is stable (resp. semistable) with respect to $\gamma$.
	
	If $i \geqslant 2$, then $E_i \vert_{F_a}$ is a trivial bundle on $F_a$, clearly $\mu_H$-semistable. It follows by Corollary \ref{cor:high-dim-fiber} that $E_i$ is semistable with respect to $\gamma$.
\end{proof}

Consider the curve class 
\begin{equation}
	\label{eqn:delta}
	\delta = h_0h_1h_2 h_3^2 \cdots h_n^2 / (H^2)^{n-2} = h_0 h_1 h_2 (\tau^\ast q_3^\ast [\pt]) \cdots (\tau^\ast q_3^\ast [\pt]).
\end{equation}
It is clear that $\delta$ is an integral fiber class for the projection $Q_2 \coloneqq q_3 \times \cdots \times q_n$. Let $S: A^3 \rightarrow A$ be the addition with respect to the group law on $A$, then for each closed point $a' \coloneqq (a_3, \cdots, a_n) \in A^{n-3}$, the fiber $$ G_{a'} \coloneqq Q_2^{-1}(a') \cong S^{-1}(a_3 + \cdots + a_n). $$ Both of the following lemmas will be useful

\begin{lemma}
	\label{lem:Bertini}
	Let $S: A^3 \rightarrow A$ be the addition with respect to the group law on $A$. For any fixed point $b \in A$, let $q'_i: S^{-1}(b) \rightarrow A$ be the composition of the embedding and the projection to the $i$-th factor for $0 \leqslant i \leqslant 2$. Assume $H$ is a sufficiently ample divisor on $A$. Let $C_i \in \lvert H \rvert$ for $0 \leqslant i \leqslant 2$. Then the following assertions hold:
	\begin{enumerate}
		\item The scheme theoretic intersection $(q'_0)^{-1}(C_0) \cap (q'_1)^{-1}(C_1) \cap (q'_2)^{-1}(C_2)$ in $S^{-1}(b)$ is a smooth curve $C$ for a generic choice of $C_i$ for $0 \leqslant i \leqslant 2$;
		\item Each projection $q'_i: C \rightarrow C_i$ is a finite morphism for $0 \leqslant i \leqslant 2$.
	\end{enumerate}
\end{lemma}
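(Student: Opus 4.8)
\emph{The plan} is to reduce both assertions to a Bertini argument on $A\times A$. Fix $b\in A$ and identify $S^{-1}(b)\cong A\times A$ by $(a_0,a_1)\mapsto(a_0,a_1,b-a_0-a_1)$. Under this identification $q'_0,q'_1$ become the two projections and $q'_2$ becomes $(a_0,a_1)\mapsto b-a_0-a_1$, so each $q'_i\colon A\times A\to A$ is a smooth surjective morphism whose fibres are abelian surfaces; in particular $(q'_i)_\ast\cO_{A\times A}=\cO_A$, so pullback induces an isomorphism of linear systems $(q'_i)^\ast\colon|H|\xrightarrow{\ \sim\ }L_i\coloneqq|(q'_i)^\ast H|$, and each $L_i$ is base-point free because $|H|$ is. Since $H$ is sufficiently ample we may assume $|H|$ is very ample and $N\coloneqq\dim|H|\geqslant3$. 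The scheme in~(1) is then the intersection $D_0\cap D_1\cap D_2$ of general members $D_i=(q'_i)^\ast C_i\in L_i$ inside the smooth fourfold $A\times A$.

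\emph{Part (1).} I would run the incidence-variety version of Bertini. Put
\[
	\cI=\bigl\{(x,D_0,D_1,D_2)\in(A\times A)\times L_0\times L_1\times L_2 : x\in D_0\cap D_1\cap D_2\bigr\}.
\]
Because the $L_i$ are base-point free, the fibre of the first projection $\cI\to A\times A$ over any point is a product $\PP^{N-1}\times\PP^{N-1}\times\PP^{N-1}$ of three hyperplanes, so $\cI$ is smooth of dimension $3N+1$. The projection $\pi\colon\cI\to L_0\times L_1\times L_2$ is dominant: for general $C_0,C_1\in|H|$ the addition map $C_0\times C_1\to A$ is surjective, since otherwise the translates of $C_0$ by points of $C_1$ would all lie in a fixed curve, forcing $C_0$ to be invariant under a positive-dimensional subgroup of $A$ and hence elliptic, which is impossible because a general $C_0\in|H|$ has genus $1+\tfrac12H^2\geqslant2$; consequently the condition $b-a_0-a_1\in C_2$ is solvable and $D_0\cap D_1\cap D_2\neq\varnothing$. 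Generic smoothness over $\CC$ then yields a dense open $U\subseteq L_0\times L_1\times L_2$ over which $\pi$ is smooth, and for $(C_0,C_1,C_2)\in U$ the fibre $(q'_0)^{-1}(C_0)\cap(q'_1)^{-1}(C_1)\cap(q'_2)^{-1}(C_2)$ is smooth of dimension $\dim\cI-\dim(L_0\times L_1\times L_2)=1$, i.e.\ a smooth curve $C$.

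\emph{Part (2).} A proper morphism between curves is finite exactly when it contracts no irreducible component, so I need only arrange that, for general parameters, no component of $C$ is contracted by any $q'_i$. If a component $C'$ of $C$ were contracted by $q'_2$ to a point $p\in A$, then $C'$ would lie in $(q'_0)^{-1}(C_0)\cap(q'_1)^{-1}(C_1)\cap(q'_2)^{-1}(p)$; identifying the fibre $(q'_2)^{-1}(p)\cong A$ via $a_0\mapsto(a_0,b-p-a_0,p)$, this intersection is $C_0\cap\bigl((b-p)-C_1\bigr)$, the intersection in $A$ of $C_0$ with a translate of $[-1]^\ast C_1$. For $C'$ to be contained in it, the (by Bertini, irreducible) general curve $C_0$ must coincide with that translate, i.e.\ $C_0$ must be a translate of $[-1]^\ast C_1$; such pairs $(C_0,C_1)$ form a subset of $|H|\times|H|$ of dimension at most $N+2$, hence a proper closed subset since $N\geqslant3$. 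The analogous computations for $q'_0$ and $q'_1$ force, respectively, one of the three curves to be a translate of the negative of another, again confining the bad parameters to a proper closed subset of $|H|^{3}$. Intersecting $U$ with the complements of these finitely many proper closed loci produces the dense open set of $(C_0,C_1,C_2)$ for which both assertions hold, with each $q'_i\colon C\to C_i$ then finite (in fact surjective).

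\emph{The main obstacle} is entirely the dimension bookkeeping in Part~(2): the argument works only because $\dim|H|$ is large (so that the ``translate'' loci are proper) and because a general curve in $|H|$ is irreducible and non-elliptic, which is precisely what ``$H$ sufficiently ample'' provides. Part~(1) is routine once one observes that the $q'_i$ are smooth fibrations, so that $|H|$ pulls back to a base-point-free system on each factor and ordinary Bertini applies.
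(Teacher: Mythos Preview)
Your proof is correct, but it follows a genuinely different route from the paper's.

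For Part~(1), the paper argues by \emph{iterated} Bertini on the smooth irreducible fourfold $Y_0=S^{-1}(b)$: since each $(q'_i)^{-1}|H|$ is base-point free, a generic $C_0$ gives a smooth irreducible threefold $Y_1=Y_0\cap(q'_0)^{-1}(C_0)$, then a generic $C_1$ gives a smooth irreducible surface $Y_2$, and finally a generic $C_2$ gives a smooth irreducible curve $C$. The crucial gain is \emph{irreducibility} of $C$, which your incidence-variety argument does not directly yield (the total space $\cI$ being irreducible does not force the generic fibre of $\pi$ to be connected).

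This difference propagates to Part~(2). Because the paper already knows $C$ is irreducible, the map $q'_i\colon C\to C_i$ is either constant or surjective onto the irreducible curve $C_i$; the paper then simply invokes surjectivity and \cite[Proposition~II.6.8]{hart} to conclude finiteness. Your approach, lacking irreducibility, must instead exclude contracted components one at a time via the translate-locus dimension count. Your count is valid (indeed the bound $N+2$ is generous: since a translate of $[-1]^\ast C_1$ lies in $|H|$ only for finitely many translations, the bad locus is even smaller), but the paper's route is shorter once irreducibility is in hand. Conversely, your argument makes explicit exactly the nonconstancy step that the paper leaves implicit when it asserts surjectivity of $q'_i|_C$.
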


\begin{proof}
	For the first part we use Bertini theorem; see e.g. \cite[Proposition 0.5]{EH16}. We first observe that the addition morphism $S$ is smooth, hence $Y_0 = S^{-1}(b)$ is smooth and irreducible. By assumption the complete linear system $\lvert H \rvert$ has no base point, so does $(q'_0)^{-1}(H)|_{Y_0}$. Hence generic choices of $C_0 \in \lvert H \rvert$ and $Y_1 = Y_0 \cap (q'_0)^{-1}(C_0)$ are smooth and irreducible by Bertini theorem. For the same reason $(q'_1)^{-1}(H)|_{Y_1}$ has no base point, and hence generic choices of $C_1 \in \lvert H \rvert$ and $Y_2 = Y_1 \cap (q'_1)^{-1}(C_1)$ are smooth and irreducible. Similarly, generic choices of $C_2 \in \lvert H \rvert$ and $C = Y_2 \cap (q'_2)^{-1}(C_2)$ are smooth and irreducible. A dimension count shows that $C$ is a curve.
	
	For the second part, since $C$ is smooth irreducible, and the projection $q'_i \colon C \to C_i$ is surjective, \cite[Proposition II.6.8]{hart} implies that it is a finite morphism. 
\end{proof}

\begin{lemma}
	\label{lem:delta-class}
	For any $\mu_H$-stable (resp. $\mu_H$-semistable) vector bundle $E$ on $A$, $E_i$ is stable (resp. semistable) with respect to $\delta$ for each $0 \leqslant i \leqslant 2$, and semistable with respect to $\delta$ for each $3 \leqslant i \leqslant n$.
\end{lemma}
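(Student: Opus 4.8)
The plan is to mimic closely the proof of Lemma \ref{lem:gamma-class}, replacing the one-dimensional fiber geometry by the three-dimensional geometry of the fibers $G_{a'} \cong S^{-1}(a_3 + \cdots + a_n)$ of the projection $Q_2$, and by feeding the output of Lemma \ref{lem:Bertini} into Corollary \ref{cor:high-dim-fiber}. Since $\delta$ is the complete-intersection curve class cut out by three copies of the natural divisor on the fibers $G_{a'}$, the family $V_T \to T$ in Corollary \ref{cor:high-dim-fiber} will be taken to be $Q_2 \colon P_n(A) \to A^{n-3}$ (or an open dense part thereof), with relative dimension $3$ and with $\cO_\pi(1)$ corresponding to $h_0 + h_1 + h_2$ restricted to fibers; note that the twofold self-intersection of this class in a fiber is precisely $\delta$. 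Because $Q_2$ is surjective with irreducible fibers, the image of the total space is visibly dense in $P_n(A)$.

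First I would treat the indices $0 \leqslant i \leqslant 2$. Fix a general closed point $a' = (a_3,\dots,a_n) \in A^{n-3}$, and restrict $E_i = \tau^\ast q_i^\ast E$ to the fiber $G_{a'}$. Under the identification $G_{a'} \cong S^{-1}(b)$ with $b = a_3 + \cdots + a_n$, the bundle $E_i\vert_{G_{a'}}$ is $(q'_i)^\ast E$, where $q'_i \colon S^{-1}(b) \to A$ is the $i$-th projection as in Lemma \ref{lem:Bertini}. Now I would invoke Lemma \ref{lem:Bertini}: for a generic complete intersection $C = (q'_0)^{-1}(C_0) \cap (q'_1)^{-1}(C_1) \cap (q'_2)^{-1}(C_2)$ with $C_j \in \lvert H \rvert$, the curve $C$ is smooth and each $q'_i \colon C \to C_i$ is finite. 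The key point is then that the restriction of $(q'_i)^\ast E$ to $C$ is the pullback along the finite morphism $q'_i \colon C \to C_i$ of the restriction $E\vert_{C_i}$; since $E$ is $\mu_H$-stable (resp. semistable) and $C_i$ is a general member of $\lvert H \rvert$ with $H$ sufficiently ample, $E\vert_{C_i}$ is stable (resp. semistable) by Mehta–Ramanathan, and pullback of a (semi)stable bundle along a finite morphism of smooth curves is again (semi)stable. Hence $(\psi^\ast E_i)\vert_{G_{a'}} = (q'_i)^\ast E$ is $\mu$-stable (resp. semistable) with respect to $h_0 + h_1 + h_2$, and Corollary \ref{cor:high-dim-fiber} yields that $E_i$ is stable (resp. semistable) with respect to $\delta$.

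For the remaining indices $3 \leqslant i \leqslant n$, the situation is exactly as in the second half of Lemma \ref{lem:gamma-class}: on the fiber $G_{a'}$ the bundle $E_i\vert_{G_{a'}}$ is trivial (pulled back from a point $a_i$), hence $\mu$-semistable with respect to any polarization, and Corollary \ref{cor:high-dim-fiber} gives that $E_i$ is semistable with respect to $\delta$. The main obstacle I anticipate is the bookkeeping needed to make Corollary \ref{cor:high-dim-fiber} applicable: one must verify that $h_0 + h_1 + h_2$ restricted to the fibers of $Q_2$ is relatively sufficiently ample (so that generic complete intersections are the smooth curves produced by Lemma \ref{lem:Bertini}), and that the curve class obtained as its self-intersection in fibers is indeed $\delta$ as normalized in \eqref{eqn:delta}; this is a numerical check, not a conceptual difficulty. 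A minor subtlety is that the Bertini-type statement and the finiteness of the projections $q'_i$ in Lemma \ref{lem:Bertini} are stated for $S^{-1}(b) \subset A^3$, so one needs the elementary observation that the fiber $G_{a'}$ of $Q_2$ is isomorphic to $S^{-1}(b)$ compatibly with the projections $q_0, q_1, q_2$, which is immediate from the definitions.
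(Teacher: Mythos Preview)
Your ingredients are correct—Lemma \ref{lem:Bertini} for smooth curves, Mehta--Ramanathan for $E|_{C_i}$, and preservation of (semi)stability under finite pullback of curves—but the packaging via Corollary \ref{cor:high-dim-fiber} does not work as written. First, the fiber $G_{a'}\cong S^{-1}(b)$ has dimension $4$, not $3$; with a single relatively ample class $\cO_\pi(1)=h_0+h_1+h_2$ the curve class produced by Corollary \ref{cor:high-dim-fiber} is the threefold self-intersection $(h_0+h_1+h_2)^3$, a positive combination of classes of type $h_i^2h_j$ and $h_0h_1h_2$, not $\delta=h_0h_1h_2$ alone. (Indeed, the curves from Lemma \ref{lem:Bertini} are complete intersections of three \emph{different} linear systems $|h_0|,|h_1|,|h_2|$, so a single-polarization corollary is not the natural tool.) Second, and more seriously, your sentence ``Hence $(q_i')^\ast E$ is $\mu$-stable with respect to $h_0+h_1+h_2$'' is a non sequitur: you have only checked stability on generic curves $C\subset G_{a'}$, and Mehta--Ramanathan does not run in the direction ``stable on generic curves $\Rightarrow$ stable on the variety''.

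The paper avoids both problems by not invoking Corollary \ref{cor:high-dim-fiber} for $0\leqslant i\leqslant 2$ at all. It applies Lemma \ref{lem:slope-stable} directly: the curves $C=(q'_0)^{-1}(C_0)\cap(q'_1)^{-1}(C_1)\cap(q'_2)^{-1}(C_2)$ from Lemma \ref{lem:Bertini} lie precisely in the class $\delta$, and as $a'$ and the $C_j$ vary they sweep out a dense subset of $P_n(A)$; the (semi)stability of $E_i|_C=(q'_i)^\ast(E|_{C_i})$ follows from Mehta--Ramanathan on $A$ together with \cite[Lemmas 3.2.2, 3.2.3]{HL}. All the pieces you wrote down are exactly the ones needed—just feed them into Lemma \ref{lem:slope-stable} rather than Corollary \ref{cor:high-dim-fiber}. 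Your argument for $3\leqslant i\leqslant n$ matches the paper's.
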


\begin{proof}
	If $0 \leqslant i \leqslant 2$, then for each $a' \in A^{n-3}$, the fiber $G_{a'}$ contains all curves of the form $(q'_0)^{-1}(C_0) \cap (q'_1)^{-1}(C_1) \cap (q'_2)^{-1}(C_2)$, which are in the class $\delta$. By Lemma \ref{lem:Bertini}, a generic member of them is smooth, and the projection gives a finite morphism $q'_i \colon C \to C_i$ such that $E_i \vert_C = (q'_i)^\ast (E \vert_{C_i})$. By the Theorem of Mehta and Ramanathan \cite[Theorems 7.2.1, 7.2.8]{huy2}, $E \vert_{C_i}$ is stable (semistable) for a generic choice of $C_i$. It follows by \cite[Lemmas 3.2.2, 3.2.3]{HL} that $E_i|_C$ is stable (semistable). All such curves $C$ cover a dense subset of $G_{a'}$. Since $a' \in A^{n-3}$ is arbitrary, we conclude that $E_i$ is stable (semistable) with respect to $\delta$ by Lemma \ref{lem:slope-stable}.
	
	If $i \geqslant 2$, then $E_i \vert_{G_{a'}}$ is a trivial bundle on $G_{a'}$, clearly $\mu_H$-semistable. It follows by Corollary \ref{cor:high-dim-fiber} that $E_i$ is semistable with respect to $\delta$.
\end{proof}

It was proven in \cite[Proposition 4.7]{stapleton} that the pullback of a slope stable bundle $E$ from $A$ to $A^{n+1}$ via the projection to any factor is stable with respect to a natural $\mathfrak{S}_{n+1}$-invariant ample class. The following proposition shows that a further restriction to $P_n(A)$, the zero fiber of the addition morphism, remains stable.

\begin{proposition}
	\label{prop:StableE}
	Under the above notations, let $E$ be a $\mu_H$-stable bundle on $A$. Then $E_i$ is a $\mu_{H_P}$-stable bundle on $P_n(A)$ for each $0 \leqslant i \leqslant n$.
\end{proposition}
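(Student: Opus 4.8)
The plan is to derive the $\mu_{H_P}$-stability of each $E_i$ on $P_n(A)$ from the stability of the $E_i$ with respect to the two special curve classes $\gamma$ and $\delta$ established in Lemmas \ref{lem:gamma-class} and \ref{lem:delta-class}, combined with the $\mathfrak{S}_{n+1}$-symmetry of $P_n(A)$. Two preliminary remarks: first, $E_i = \tau^\ast q_i^\ast E$ is a vector bundle, being obtained from the vector bundle $E$ by pullback along the projection $q_i$ and the closed embedding $\tau$; second, since $\dim P_n(A) = 2n$, being $\mu_{H_P}$-stable is by definition the same as being stable with respect to the curve class $H_P^{2n-1}$.

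The heart of the argument is a combinatorial expansion of $H_P^{2n-1} = \bigl(\sum_{i=0}^{n} h_i\bigr)^{2n-1}$. Since each $h_i = \tau^\ast q_i^\ast H$ is pulled back from the surface $A$, we have $h_i^3 = \tau^\ast q_i^\ast(H^3) = 0$, so in the multinomial expansion only monomials $h_0^{a_0}\cdots h_n^{a_n}$ with $0 \leqslant a_i \leqslant 2$ and $\sum_i a_i = 2n-1 = 2(n+1)-3$ survive, each occurring with a strictly positive coefficient. Because every exponent is at most $2$, such an exponent vector $(a_0, \dots, a_n)$ is either a permutation of $(0,1,2,\dots,2)$ or a permutation of $(1,1,1,2,\dots,2)$; and using $h_i^2 = (H^2)\,\tau^\ast q_i^\ast[\pt]$, a monomial of the first type is a positive multiple of an $\mathfrak{S}_{n+1}$-translate of $\gamma$, while one of the second type is a positive multiple of an $\mathfrak{S}_{n+1}$-translate of $\delta$. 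Hence $H_P^{2n-1}$ is a positive $\RR$-linear combination of $\mathfrak{S}_{n+1}$-translates of $\gamma$ and $\delta$, and $\gamma$ itself occurs among these.

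Next I would exploit that $H_P$ is $\mathfrak{S}_{n+1}$-invariant, so every $g \in \mathfrak{S}_{n+1}$ acts as an automorphism of $P_n(A)$ fixing $H_P$ and permuting $E_0, \dots, E_n$ among themselves. Consequently $E_0$ is semistable with respect to a translate $g_\ast\gamma$ (resp. $g_\ast\delta$) if and only if $E_k$ is semistable with respect to $\gamma$ (resp. $\delta$) for a suitable index $k$, which holds for all $k$ by Lemmas \ref{lem:gamma-class} and \ref{lem:delta-class}. By linearity of the slope in the curve class, $E_0$ is then semistable with respect to every positive combination of such translates; writing $H_P^{2n-1} = c\gamma + \beta'$ with $c > 0$ and $\beta'$ such a positive combination, $E_0$ is semistable with respect to $\beta'$ and, by Lemma \ref{lem:gamma-class}, stable with respect to $\gamma$. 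Lemma \ref{lem:additive-stable} then yields that $E_0$ is stable with respect to $H_P^{2n-1}$. Finally, for an arbitrary index $i$ I would pick $g \in \mathfrak{S}_{n+1}$ with $g^\ast E_i \cong E_0$; since $g^\ast H_P = H_P$, stability of $E_0$ with respect to $H_P^{2n-1}$ transfers to $E_i$.

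The main obstacle will be the bookkeeping in the two middle steps rather than any deep difficulty: one must be sure that the $(2n-1)$-st self-intersection of $H_P$ involves no exponent pattern beyond the two that are realized — after an $\mathfrak{S}_{n+1}$-translation — by $\gamma$ and $\delta$, and that (semi)stability with respect to a translate $g_\ast\gamma$ really does correspond, under the automorphism $g$, to (semi)stability of the appropriate $E_k$ with respect to $\gamma$. This is precisely the point at which the two carefully chosen curve classes become sufficient. The more substantial geometric input — restricting stability to curves, the theorem of Mehta--Ramanathan, and the Bertini-type arguments — has already been absorbed into Lemmas \ref{lem:gamma-class} and \ref{lem:delta-class}.
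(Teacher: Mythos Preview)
Your proposal is correct and follows essentially the same approach as the paper's proof: expand $H_P^{2n-1}$ into monomials $h_0^{a_0}\cdots h_n^{a_n}$ with each $a_j \leqslant 2$, observe that every surviving term is (up to the $\fS_{n+1}$-action) a positive multiple of $\gamma$ or $\delta$, invoke Lemmas~\ref{lem:gamma-class} and~\ref{lem:delta-class} for semistability with respect to each term and stability with respect to at least one, and conclude via Lemma~\ref{lem:additive-stable}. Your version simply makes explicit the $\fS_{n+1}$-symmetry and the exponent bookkeeping that the paper leaves implicit in the phrase ``possibly with the indices permuted.''
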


\begin{proof}
	By replacing $H$ with a high tensor power of itself, we can assume that a generic element $C \in \lvert H \rvert$ in the linear system is a smooth curve such that $E|_C$ is slope stable. We need to show that $E_i$ is slope stable with respect to the curve class $H_P^{2n-1}$. We expand the product to obtain
	\begin{equation*}
		\label{eqn:AmpleExp}
		H_P^{2n-1} = \sum_{\substack{k_0 + \cdots + k_n = 2n-1 \\ 0 \leqslant k_0, \cdots, k_n \leqslant 2}} c_{k_0 \cdots k_n} h_0^{k_0} \cdots h_n^{k_n}
	\end{equation*}
	where each $c_{k_0 \cdots k_n}$ is some positive integer, and each term in the summation is a positive multiple of a curve class of the form \eqref{eqn:gamma} or \eqref{eqn:delta}, possibly with the indices permuted. Lemmas \ref{lem:gamma-class} and \ref{lem:delta-class} guarantee that $E_i$ is semistable with respect to each such curve class, and stable with respect to at least one of these classes. It follows by Lemma \ref{lem:additive-stable} that $E_i$ is stable with respect to the curve class $H_P^{2n-1}$; in other words, $E_0$ is $\mu_{H_P}$-stable.
\end{proof}

\subsection{Tautological bundles}

For any torsion free coherent sheaf $F$ on $\KA$, we follow \cite[\S 1]{stapleton} to define an $\fS_{n+1}$-invariant coherent sheaf on $P_n(A)$ by
$$ (F)_P = (j_P)_\ast \sigma_\circ^\ast (h_\circ^{-1})^\ast j_K^\ast F, $$
which is reflexive if $F$ itself is reflexive. Moreover, we observe that an analogue of \cite[Lemma 1.2]{stapleton}, namely
\begin{equation}
	\label{eqn:MultiCover}
	(n+1)! \int_{K_n(A)} c_1(F) \cdot (H_K)^{2n-1} = \int_{P_n(A)} c_1((F)_P) \cdot (H_P)^{2n-1}
\end{equation}
holds due to the relevant diagonals having codimension $2$. The following result is an analogue of \cite[Theorem 1.4]{stapleton} in the Kummer case.

\begin{proposition}
	\label{thm:boundary-stable}
	Let $E$ be a $\mu_H$-stable bundle on $A$ not isomorphic to $\cO_A$. Then $E^{(n)}$ is a $\mu_{H_K}$-stable bundle on $\KA$.
\end{proposition}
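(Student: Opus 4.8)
The plan is to transfer the stability statement on $K_n(A)$ to a statement about the $\fS_{n+1}$-invariant reflexive sheaf $(E^{(n)})_P$ on $P_n(A)$, and then exploit the explicit description of tautological sheaves. First I would identify $(E^{(n)})_P$ concretely: since over the open locus $\KA_\circ$ the universal subscheme consists of $n+1$ distinct reduced points, the pullback $\sigma_\circ^\ast (h_\circ^{-1})^\ast j_K^\ast E^{(n)}$ is naturally $\bigoplus_{i=0}^n q_i^\ast E$ restricted to $P_n(A)_\circ$, and taking $(j_P)_\ast$ (using reflexivity and codimension $2$) gives $(E^{(n)})_P = \bigoplus_{i=0}^n E_i$ on $P_n(A)$. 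Thus $(E^{(n)})_P$ is the direct sum of the bundles $E_i$, each of which is $\mu_{H_P}$-stable of the same slope $\mu_{H_P}(E_i) = \mu^{H_P^{2n-1}}(E_i)$ by Proposition \ref{prop:StableE} (the slope is the same for all $i$ by the $\fS_{n+1}$-symmetry permuting the $h_i$'s).

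Next I would run Stapleton's destabilization argument. Suppose $E^{(n)}$ is not $\mu_{H_K}$-stable; then there is a saturated subsheaf $F \subset E^{(n)}$ with $\mu_{H_K}(F) \geqslant \mu_{H_K}(E^{(n)})$ and $0 < \rk F < \rk E^{(n)}$. Passing to $P_n(A)$, the functor $(-)_P$ produces an $\fS_{n+1}$-invariant saturated subsheaf $(F)_P \subset (E^{(n)})_P = \bigoplus_i E_i$, and by the multiplicity formula \eqref{eqn:MultiCover} the slope inequality is preserved: $\mu_{H_P}((F)_P) \geqslant \mu_{H_P}((E^{(n)})_P)$. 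But $\bigoplus_i E_i$ is a direct sum of $\mu_{H_P}$-stable bundles all of the same slope, hence it is polystable, and any subsheaf achieving slope $\geqslant$ the common slope must be a subsum $\bigoplus_{i \in I} E_i$ (up to saturation, using that the $E_i$ are pairwise the relevant Jordan–Hölder factors — one needs here that for $i \neq j$ there is no nonzero map $E_i \to E_j$ of the same slope unless they agree, which follows from slope stability). For such a subsheaf to be $\fS_{n+1}$-invariant, the index set $I$ must be permutation-invariant, forcing $I = \varnothing$ or $I = \{0, \dots, n\}$, contradicting $0 < \rk F < \rk E^{(n)}$. This is where the hypothesis $E \neq \cO_A$ enters: if $E = \cO_A$ then $E_i = \cO_{P_n(A)}$ for all $i$ and the $E_i$ are mutually isomorphic, so invariant subsheaves of intermediate rank (e.g. the image of the diagonal, or an invariant sub-line-bundle) do exist, and indeed $\cO_A^{(n)}$ is genuinely non-stable (it contains $\cO_{\KA}$).

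The main obstacle I anticipate is the passage from "$(F)_P$ is an $\fS_{n+1}$-invariant subsheaf of $\bigoplus_i E_i$ of slope $\geqslant$ the common slope" to "$(F)_P$ is a subsum". One must rule out "diagonal" subsheaves, i.e. a copy of a stable bundle $E$ mapping into $\bigoplus_{i \in I} E_i$ via $(\phi_i)_{i \in I}$ with several $\phi_i \neq 0$; such a subsheaf has the correct slope and could a priori be invariant even when no individual $E_i$ is fixed by the symmetry. The resolution is that this forces various $E_i \cong E_j$ as subsheaves of $E^{(n)}$ compatibly with the group action, which when unwound says $E$ is pulled back from a nontrivial quotient of $A$ or, in the extreme case, is trivial; since the only way to get a nonzero $\fS_{n+1}$-equivariant "diagonal" map landing in all factors with $E$ stable is $E = \cO_A$ (constant functions), excluding $\cO_A$ removes it. I would phrase this carefully by noting that a saturated subsheaf of a polystable sheaf of the same slope is automatically a direct summand (a sub-polystable-sheaf), so $(F)_P$ is itself polystable with Jordan–Hölder factors among $\{E_i\}$, and then an $\fS_{n+1}$-invariant direct summand of $\bigoplus_i E_i$ with the $E_i$ pairwise non-isomorphic must be a subsum indexed by an invariant set; the pairwise non-isomorphism of the $E_i$ as sheaves on $P_n(A)$ (for $E \neq \cO_A$) is the key lemma, provable by restricting to a fiber $F_a$ of $Q_1$ where $E_i|_{F_a} = (q_i')^\ast E$ for $i \in \{0,1\}$ is non-trivial while $E_j|_{F_a}$ is trivial for $j \geqslant 2$, together with a translation argument to separate $E_0$ from $E_1$ and the remaining indices from each other.
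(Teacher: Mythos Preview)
Your proposal is correct and follows essentially the same route as the paper: transfer to $P_n(A)$, identify $(E^{(n)})_P = \bigoplus_i E_i$ as a polystable sheaf with $\mu_{H_P}$-stable summands (Proposition~\ref{prop:StableE}), and use $\fS_{n+1}$-invariance together with the pairwise non-isomorphism of the $E_i$ to rule out proper destabilizing subsheaves. The paper phrases the endgame slightly differently---it picks a single stable factor $G \subset (F)_P$ of maximal slope and studies the projections $G \to E_i$ directly, rather than invoking the polystable/direct-summand formalism---but the content is identical.

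One point where the paper is cleaner: for $E_i \not\cong E_j$ when $E \neq \cO_A$, the paper simply drops a coordinate $k \notin \{i,j\}$ to identify $P_n(A) \cong A^n$, after which $E_i$ and $E_j$ are pullbacks of a non-trivial sheaf along distinct projections $A^n \to A$, hence non-isomorphic. Your restriction-to-fibers sketch works but is more circuitous (and the ``translation argument'' separating $E_0$ from $E_1$ on a single fiber $F_a$ is delicate, since $(-1)^*E$ could coincide with a translate of $E$ for special $E$); the drop-a-coordinate argument avoids this entirely.
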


\begin{proof}
	It suffices to show that every reflexive subsheaf of $E^{(n)}$ of smaller rank has a smaller slope. Let $F$ be such a subsheaf of $E^{(n)}$. Then $(F)_P$ is an $\fS_{n+1}$-invariant reflexive subsheaf of $(E^{(n)})_P$. Using equation \eqref{eqn:MultiCover}, it is enough to prove $\mu_{H_P}((F)_P) < \mu_{H_P}((E^{(n)})_P)$. Let $G$ be a non-zero (not necessarily $\fS_{n+1}$-invariant) $\mu_{H_P}$-stable subsheaf of $(F)_P$ of maximal slope; e.g., we can take $G$ to be the first factor in a Jordan-H\"older filtration of $(F)_P$. A similar argument as in \cite[Lemma 1.1]{stapleton} shows that $(E^{(n)})_P = E_0 \oplus \cdots \oplus E_n$ (both sides are reflexive sheaves and isomorphic on $P_n(A)_\circ$ whose complement is of codimension $2$). Therefore, there exists some $i$ such that the composition of the embedding and projection
	\begin{equation}
		\label{eqn:CompIsom}
		G \hookrightarrow (F)_P \hookrightarrow (E^{(n)})_P \twoheadrightarrow E_i
	\end{equation}
	is non-zero. Since $E_i$ is also $\mu_{H_P}$-stable for each $0 \leqslant i \leqslant n$ by Proposition \ref{prop:StableE}, we must have $\mu_{H_P}(G) \leqslant \mu_{H_P}(E_i)$.
	
	\textsc{Case 1.} If $\mu_{H_P}(G) < \mu_{H_P}(E_i)$, then $$\mu_{H_P}((F)_P) \leqslant \mu_{H_P}(G) < \mu_{H_P}(E_i) = \mu_{H_P}((E^{(n)})_P),$$ hence $(F)_P$ does not destabilize $(E^{(n)})_P$.
	
	\textsc{Case 2.} If $\mu_{H_P}(G) = \mu_{H_P}(E_i)$, then the composition map \eqref{eqn:CompIsom} must be an isomorphism. Since $E \not\cong \cO_A$, we have $E_i \not\cong E_j$ for $i \neq j$: Choose any $k$ different from $i$ and $j$. Then the projection $q'' = (q_0, \cdots, q_{k-1}, q_{k+1}, \cdots, q_n)$ identifies $P_n(A)$ with $A^n$. The pullback of a non-trivial sheaf via projections to two distinct factors are not isomorphic. It follows that the composition
	\begin{equation*}
		G \hookrightarrow (F)_P \hookrightarrow (E^{(n)})_P \twoheadrightarrow E_j
	\end{equation*}
	must be zero for any $j \neq i$. It follows that $G$ is the direct summand $E_i$ of $(E^{(n)})_P$. Since $(F)_P$ is an $\fS_{n+1}$-invariant subsheaf of $(E^{(n)})_P$ containing the direct summand $E_i$, we obtain $(F)_P = (E^{(n)})_P$, which cannot happen since the left-hand side has a smaller rank than the right-hand side. This concludes the proof.
\end{proof}

In the following we will apply the perturbation argument in \cite[\S 4]{stapleton} to show the slope stability of $E^{(n)}$ with respect to an ample divisor on $K_n(A)$, which stays constant when we deform $E$ in its moduli.

\subsection{The family of stable bundles}

In this subsection we study families of stable tautological bundles. We assume that $$v = (v_0, v_1, v_2) \in H^0(A, \ZZ) \oplus \NS(A) \oplus H^4(A, \ZZ)$$ is a Mukai vector satisfying the following condition
\begin{itemize}
	\item [$(\dagger)$] the projective moduli space $M_H(v)$ of $H$-semistable sheaves of class $v$ is non-empty and contains only $\mu_H$-stable locally free sheaves.
\end{itemize}
This condition is easy to achieve: First of all we require $v_0 > 0$. In order for every $\mu_H$-semistable sheaf of class $v$ to be stable, it suffices to require that $v_0$ and $H \cdot v_1$ are coprime. The nonemptiness can be achieved by requiring 
\begin{equation}
	\label{eqn:nonempty}
	\langle v^2 \rangle = v_1^2 - 2v_0v_2 \geqslant 0.
\end{equation}
Finally, the local freeness of all $\mu_H$-stable sheaves holds when $v_2$ takes the largest possible value satisfying \eqref{eqn:nonempty} for any fixed $v_0$ and $v_1$. For instance, if $A$ is an abelian surface with a primitive ample divisor $H$ such that $H^2 = 16$, then the Mukai vector $v = (5, H, 1)$ satisfies the condition $(\dag)$.

Under the condition $(\dagger)$, we have seen by Proposition \ref{thm:boundary-stable} that the tautological bundle $E^{(n)}$ is $\mu_{H_K}$-stable for each $E \in M_H(v)$. However, $H_K$ lies in the boundary of the ample cone of $K_n(A)$. In order to establish the stability of the tautological bundle with respect to some ample class, we need the following result

\begin{proposition}\label{prop:sameH2}
	Under condition $(\dagger)$, there exists an ample class $H' \in \NS(\KA)$ near $H_K$, such that $E^{(n)}$ is $\mu_{H'}$-stable for all $[E] \in M_H(v)$.
\end{proposition}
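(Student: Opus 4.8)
The plan is to invoke the perturbation machinery from \cite[\S 4]{stapleton}, as refined in \cite{greb16} and \cite{rz2}, but with careful attention to the fact that we need a single ample class $H'$ that works simultaneously for every $E$ in the moduli space $M_H(v)$. The starting point is Proposition \ref{thm:boundary-stable}: for each $[E] \in M_H(v)$, the tautological bundle $E^{(n)}$ is $\mu_{H_K}$-stable, i.e. stable with respect to the curve class $H_K^{2n-1}$ which lies on the boundary of the movable cone. The key observation driving the perturbation is that slope stability with respect to a curve class is an open condition: if $\cF$ is stable with respect to $\gamma_0$, then $\cF$ is stable with respect to all $\gamma$ in a neighborhood of $\gamma_0$ in $N_1(\KA)_\RR$, \emph{provided} one controls the finitely many potential destabilizing subsheaves. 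First I would recall that, by boundedness, for a fixed $E$ there are only finitely many possible Hilbert polynomials (equivalently, finitely many reflexive saturated subsheaves up to the relevant equivalence) that could destabilize $E^{(n)}$ near $H_K$; each imposes one linear inequality, and since all are strict at $H_K^{2n-1}$, they remain strict in an open neighborhood.

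The genuinely new difficulty — and I expect this to be the main obstacle — is uniformity over the moduli space $M_H(v)$. One cannot simply take the intersection over all $[E]$ of the individual open neighborhoods, since a priori that intersection could be empty. The standard remedy, following the argument in \cite[\S 4]{stapleton} and especially its generalization in \cite{rz2}, is to work in families: since $M_H(v)$ is projective (by condition $(\dagger)$ it is a smooth projective moduli space of $\mu_H$-stable bundles) and the tautological construction $E \mapsto E^{(n)}$ is a relative version of $q_\ast p^\ast$, the bundles $E^{(n)}$ fit into a single family $\cE$ on $M_H(v) \times \KA$, flat over $M_H(v)$. One then applies a relative boundedness statement: the set of saturated reflexive subsheaves of the fibers $\cE_{[E]} = E^{(n)}$ whose slope with respect to $H_K$ is close to $\mu_{H_K}(E^{(n)})$ forms a bounded family as $[E]$ varies, so only finitely many numerical types of potential destabilizers occur across the \emph{entire} moduli space. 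Each numerical type again gives one linear inequality on the curve class, strict at $H_K^{2n-1}$ by Proposition \ref{thm:boundary-stable}, and now there are only finitely many such inequalities total; hence their common region of strict validity is a nonempty open cone containing $H_K^{2n-1}$ in its closure.

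Concretely the steps I would carry out are: (1) assemble the tautological bundles into a flat family $\cE$ over $M_H(v)$, using the universal family $\cZ \subset A \times \KA$ and the fact that $q$ is finite flat, so that $q_\ast p^\ast$ commutes with base change and produces a locally free sheaf on $M_H(v) \times \KA$; (2) invoke relative boundedness (Grothendieck's Quot-scheme boundedness, or the Maruyama-type statements cited in \cite[\S 4]{stapleton}) to conclude that the potential destabilizing subsheaves of the fibers, of rank $< \rk E^{(n)} = (n+1)\rk E$ and slope $\geqslant \mu_{H_K}(E^{(n)}) - \varepsilon$, have bounded, hence finitely many, numerical invariants $(\rk, c_1)$; (3) for each such pair $(r', c_1')$, the inequality $\frac{c_1' \cdot \gamma}{r'} < \mu^{\gamma}(E^{(n)})$ is an open half-space condition on $\gamma \in N_1(\KA)_\RR$, and by Proposition \ref{thm:boundary-stable} it holds at $\gamma = H_K^{2n-1}$ for every $[E]$; (4) intersect the finitely many open half-spaces to get an open cone $U \subset N_1(\KA)_\RR$ with $H_K^{2n-1} \in \overline{U}$, and then — using that $H_K$ is big and nef so that $\Amp(\KA)$ meets every neighborhood of $H_K$ — choose an ample class $H' \in \NS(\KA)$ sufficiently close to $H_K$ that $(H')^{2n-1} \in U$; (5) conclude that $E^{(n)}$ is stable with respect to $(H')^{2n-1}$, i.e. $\mu_{H'}$-stable, for every $[E] \in M_H(v)$. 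The main point requiring care is step (2): one must check that the reduction to subsheaves on $\KA$ of the reflexive sheaves $(E^{(n)})_P$ on $P_n(A)$ used in Proposition \ref{thm:boundary-stable} is compatible with the family, or alternatively argue boundedness directly on $\KA$; either way the relative structure of the $\fS_{n+1}$-quotient needs to be tracked through the family. I would handle this exactly as in \cite[\S 4]{stapleton} and \cite{rz2}, noting that the only genuinely $\KA$-specific input, namely Proposition \ref{thm:boundary-stable} together with Proposition \ref{prop:StableE}, has already been established for all of $M_H(v)$ at once.
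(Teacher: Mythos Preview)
Your outline is broadly correct and close in spirit to the paper's argument, but the paper handles the two delicate steps differently, and your sketch is vague at precisely those places.

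For uniformity (your step (2)), the paper does not invoke relative Quot-scheme boundedness. Instead it observes that, by boundedness of $M_H(v^\vee)$, there exist $m, N$ independent of $E$ with $E^\vee(mH)$ globally generated, hence $E \hookrightarrow \cO_A(mH)^{\oplus N}$; since $q_\ast p^\ast$ is exact this yields $E^{(n)} \hookrightarrow q_\ast p^\ast \cO_A(mH)^{\oplus N}$, an embedding into \emph{one fixed bundle} for all $[E] \in M_H(v)$. Every potential destabilizer of any $E^{(n)}$ is then a subsheaf of this fixed bundle, so the single-sheaf finiteness result \cite[Theorem 2.29]{greb16} applies directly, and the rest of the proof of \cite[Theorem 3.4]{greb16} goes through verbatim. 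This is sharper than your family argument because the finiteness one actually needs is for first Chern classes of subsheaves with $\mu_\beta$ bounded below, where $\beta$ is an \emph{interior movable} curve class, not an ample complete-intersection class; standard Grothendieck/Maruyama boundedness is phrased for ample polarizations and does not immediately give this. The embedding trick reduces the family statement to the already-proved single-sheaf case.

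For your step (4), the assertion that some ample $H'$ near $H_K$ has $(H')^{2n-1} \in U$ requires more than ``$H_K$ big and nef'': since $H_K^{2n-1}$ lies only in $\overline{U}$, you need the map $H' \mapsto (H')^{2n-1}$ from $N^1$ to $N_1$ to be locally open at $H_K$, which is the hard Lefschetz property for $H_K$. The paper supplies this by checking that the Hilbert--Chow morphism $h \colon \KA \to S_n(A)$ is semismall (via the standard stratification of $S_n(A)$ by partitions of $n+1$), so that $H_K$ is lef in the sense of \cite[Definition 2.1.3]{dCM} and hard Lefschetz holds by \cite[Theorem 2.3.1]{dCM}; this is what makes \cite[Proposition 4.8]{stapleton} applicable. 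Your sketch omits this verification entirely.
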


\begin{proof}
	By replacing $H$ with a high tensor power of itself if necessary, we assume the complete linear system $\lvert H_K \rvert$ defines (the restriction of) the Hilbert-Chow morphism $h: \KA \rightarrow S_n(A)$ as shown in \eqref{eqn:Notations}. We claim that $h$ is semismall. Indeed, for any partition $\xi$ given by $n+1 =1 \cdot n_1 + 2 \cdot n_2 + \cdots + r \cdot n_r$, we consider the locally closed subscheme $Y_\xi \subset S_n(A)$ parametrizing $n_1 + n_2 + \cdots + n_r$ distinct points, among which are $n_i$ points of multiplicity $i$ for $1 \leqslant i \leqslant r$. We have $\dim Y_\xi = 2(n_1 + n_2 + \cdots + n_r) -2$ and $\dim h^{-1}(y) = 0 \cdot n_1 + 1 \cdot n_2 + \cdots + (p-1) \cdot n_p$ for any closed point $y \in Y$. It follows that $\dim Y_\xi + 2 \dim h^{-1}(y) = 2n = \dim \KA$ which implies that $h$ is semismall. Therefore $H_K$ is lef by \cite[Definition 2.1.3]{dCM} and satisfies the hard Lefschetz property by \cite[Theorem 2.3.1]{dCM}. It then follows from \cite[Proposition 4.8]{stapleton} that each $E^{(n)}$ is $\mu_{H'}$-stable with respect to some ample class $H'$ near $H_K$. However, in order to find a single $H'$ that works simultaneously for all $E^{(n)}$, we can apply the entire proof of \cite[Proposition 4.8]{stapleton} except one step; namely, we need to find a convex open set $U$ such that $\alpha \coloneqq H_K^{2n-1}$ is in the closure of $U$, and for each $\gamma \in U$, the tautological bundle $E^{(n)}$ is stable with respect to $\gamma$ for all $[E] \in M_H(v)$.
	
	We follow the notations in \cite[Definition 3.1]{greb16}. For each $[E] \in M_H(v)$, $\SStab(E^{(n)})$ is a convex closed set containing $\alpha$. Hence the intersection 
	$$ \overline{U} \coloneqq \bigcap_{[E] \in M_H(v)} \SStab(E^{(n)}) $$
	is also a convex closed set containing $\alpha$. 
	
	We claim that \cite[Theorem 3.4]{greb16} holds for all $E^{(n)}$ simultaneously; namely, we will show that for any $\beta \in \mathrm{Mov}(\KA)^\circ$ (see \cite[Definition 2.1]{greb16} for the notation), there exists some $e \in \mathbb{Q}^+$, such that $(\alpha + \varepsilon \beta) \in \bigcap_{[E] \in M_H(v)} \Stab(E^{(n)})$ for any real $\varepsilon \in [0,e]$.
	
	To prove the claim, we first note by \cite[p.87, Lemma 5(v)]{friedman} that $E$ is $\mu_H$-stable of class $v = (v_0, v_1, v_2)$ if and only if $E^\vee$ is $\mu_H$-stable of class $v^\vee = (v_0, -v_1, v_2)$. Since $\mu_H$-stable sheaves of class $v^\vee$ are bounded, there exists some positive integer $m$ such that $E^\vee(mH)$ is globally generated and $H^i(A, E^\vee(mH))=0$ for all $i>0$. Hence there exists a surjective map $\cO_A(-mH)^{\oplus N} \twoheadrightarrow E^\vee$, where $m$ and $N$ are independent of $E$. Since $E$ is locally free, we can take the dual of the above surjective map to obtain an injective map $E \hookrightarrow \cO_A(mH)^{\oplus N}$, and complete it to an exact sequence
	$$ 0 \longrightarrow E \longrightarrow \cO_A(mH)^{\oplus N} \longrightarrow Q_E \longrightarrow 0. $$
	We apply the functor $q_\ast \circ p^\ast$ on the above sequence. It was proven in \cite[Lemma 3.1]{rz} that the morphism $p$ is flat for $n \geqslant 2$, hence the functor $p^\ast$ is exact. The morphism $q$ is finite, and thus $q_\ast$ is also exact. Therefore we obtain an exact sequence 
	\begin{equation}
		\label{eqn:subsheafE}
		0 \longrightarrow E^{(n)} \longrightarrow q_\ast p^\ast \cO_A(mH)^{\oplus N} \longrightarrow q_\ast p^\ast Q_E \longrightarrow 0.
	\end{equation}
	We note that the slope $c \coloneqq \mu_\beta(E^{(n)})$ is independent of the choice of $[E] \in M_H(v)$, and redefine the set $S$ in the proof of \cite[Theorem 3.4]{greb16} to be 
	$$ S \coloneqq \{ c_1(F) \mid F \subseteq E^{(n)} \text{ for some } [E] \in M_H(v) \text{ such that } \mu_\beta(F) \geqslant c \}. $$
	By \eqref{eqn:subsheafE} we see that $S$ is a subset of
	$$ T \coloneqq \{ c_1(F) \mid F \subseteq q_\ast p^\ast \cO_A(mH)^{\oplus N} \text{ such that } \mu_\beta(F) \geqslant c \}, $$
	which is finite by \cite[Theorem 2.29]{greb16}. Thus $S$ is a also a finite set. We can then apply the rest of the proof of \cite[Theorem 3.4]{greb16} literally to conclude the claim.
	
	Now we can show that $\overline{U}$ is of full dimension $r := \rk N_1(\KA)$. If not, then we have $\alpha \in \overline{U} \subseteq L$ for some hyperplane $L \subset N_1(\KA)_{\mathbb{R}}$. Since $\mathrm{Mov}(\KA)$ is of full dimension, we can choose some $\beta \in \mathrm{Mov}(\KA)^\circ \setminus L$. It follows that $(\alpha + \varepsilon \beta) \in \overline{U} \setminus L$ for some small $\varepsilon > 0$ by the above claim and the choice of $\beta$. This yields a contradiction. 
	
	We define $U$ to be the interior of $\overline{U}$ and claim that $U$ is non-empty. Indeed, since $\overline{U}$ is of full dimension $r$, we can choose $r+1$ points of $\overline{U}$ in general positions, which form an $r$-simplex. By the convexity of $\overline{U}$, the entire simplex is in $\overline{U}$ hence any interior point of the simplex is also an interior point of $\overline{U}$. The convexity of $U$ follows from the convexity of $\overline{U}$. And it is clear from the construction that $\alpha = H_K^{2n-1}$ is in the closure of $U$. 
	
	We finally prove that $U \subseteq \bigcap_{[E] \in M_H(v)} \Stab(E^{(n)})$. If not, suppose that there is some $\gamma_0 \in U$ and some $[E] \in M_H(v)$, such that $\gamma_0 \in \SStab(E^{(n)}) \setminus \Stab(E^{(n)})$; namely, $\mu_{\gamma_0}(F) = \mu_{\gamma_0}(E^{(n)})$ for some proper subsheaf $F$ of $E^{(n)}$. Since the slope function is linear with respect to the curve class, and $\mu_\alpha(F) < \mu_\alpha(E^{(n)})$ by Proposition \ref{prop:StableE}, one can find a hyperplane in $N^1(\KA)_\mathbb{R}$ through $\gamma_0$, such that $\mu_\gamma(E^{(n)}) - \mu_\gamma(F)$ takes opposite signs for $\gamma$ in the two open half-spaces separated by the hyperplane. In particular, $F$ destabilizes $\mathcal{U}_{\da_0}$ in one of the half-spaces. Since $U$ has non-empty intersection with both half-spaces, this contradicts the condition $U \subseteq \SStab(E^{(n)})$. Therefore we have $U \subseteq \bigcap_{[E] \in M_H(v)} \Stab(E^{(n)})$, as desired.
\end{proof}

\subsection{A component of the moduli space}

In this subsection, we show that under some favorable numerical conditions, $M_H(v)$ is isomorphic to a connected component of a moduli space of stable sheaves on $K_n(A)$. 

Indeed, we still assume that $v$ satisfies condition $(\dagger)$; or more precisely, the numerical conditions in the paragraph below $(\dagger)$ that ensure its validity. We further assume the following condition:
\begin{itemize}
	\item [$(\ddagger)$] for every $[E] \in M_H(v)$, we have $H^i(A, E)=0$ for $i>0$.
\end{itemize}
This condition is also easy to achieve. Since all stable sheaves are bounded, there exists some positive integer $m$ independent of the choice of $E$, such that $H^i(A, E(mH))=0$ for all $i>0$. By replacing $v$ with $v \cdot \ch(mH)$, we obtain a Mukai vector $v$ satisfying both $(\dagger)$ and $(\ddagger)$.

Under the above assumptions, let $H'$ be the ample line bundle constructed in Proposition \ref{prop:sameH2}, and let $\cM$ the moduli space of $\mu_{H'}$-stable sheaves on $K_n(A)$ with the same numerical invariants as $E^{(n)}$. By applying Proposition \ref{prop:sameH2}, the integral functor $q_\ast \circ p^\ast$ induces a morphism
\begin{equation}
	\label{eqn:classify1}
	f: M_H(v) \longrightarrow \cM, \quad [E] \longmapsto [E^{(n)}].
\end{equation}
In fact the morphism $f$ can be described as follows:

\begin{theorem}\label{thm:firstmain}
	Under the assumptions $(\dag)$ and $(\ddag)$, the classifying morphism \eqref{eqn:classify1} identifies $M_H(v)$ with a smooth connected component of $\cM$.
\end{theorem}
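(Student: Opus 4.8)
The plan is to show that $f$ is an open and closed immersion onto its image by verifying three things: that $f$ is injective on points, that it induces an isomorphism on tangent spaces at every point of $M_H(v)$, and that the image is both open and closed in $\cM$. The key technical tool will be the interpretation of $E^{(n)} = q_\ast p^\ast E$ as the image $\Theta(E)$ of $E$ under the integral functor $\Theta \colon \Db(A) \to \Db(\KA)$ with kernel $\cO_\cZ$, the structure sheaf of the universal subscheme $\cZ \subset A \times \KA$. Since $p$ is flat (by \cite[Lemma 3.1]{rz}) and $q$ is finite, $\Theta(E)$ is just the sheaf $E^{(n)}$ concentrated in degree $0$; more importantly, by the work of Meachan \cite{meac}, the functor $\Theta$ (or a closely related twist of it) is a $\PP^n$-functor, so its left and right adjoints $\Theta_L, \Theta_R$ satisfy the $\PP$-functor formalism of \cite{add,add16-2}. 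This gives a handle on $\RHom_{\KA}(\Theta(E), \Theta(E'))$ in terms of $\RHom_A(E, E')$ twisted by the cohomology of a projective space.

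First I would compute $\Ext^\bullet_{\KA}(E^{(n)}, E^{(n)})$ using the $\PP^n$-functor structure. The standard formula for a $\PP^n$-functor gives $\RHom_{\KA}(\Theta E, \Theta E') \cong \RHom_A(E, E') \otimes H^\bullet(\PP^n)$ after accounting for the cohomological shifts built into the $\PP$-functor data, but what matters is the degree-zero and degree-one parts. In degree zero, the $H^0(\PP^n) = \CC$ summand contributes $\Hom_A(E, E')$, and the condition $(\ddagger)$ together with stability forces the other contributions (which involve $H^i(A, \text{something})$ for $i > 0$ or $\Ext$ groups that vanish) to not interfere; concretely, for $E = E'$ one gets $\Hom_{\KA}(E^{(n)}, E^{(n)}) = \CC$, so $E^{(n)}$ is simple, and for $E \not\cong E'$ one gets $\Hom_{\KA}(E^{(n)}, E^{(n)}{}') = 0$, giving injectivity of $f$ on points (combined with the fact that $\Theta$, being a $\PP$-functor, is injective on objects in the appropriate sense, or more elementarily that $E$ can be recovered from $E^{(n)}$ via the adjoint). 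In degree one, $\Ext^1_{\KA}(E^{(n)}, E^{(n)}) \cong \Ext^1_A(E, E)$, since the higher cohomology of $\PP^n$ lives only in even degrees $\geq 2$ and the $(\ddagger)$/stability hypotheses kill the cross terms; this identifies the Zariski tangent space $T_{[E^{(n)}]}\cM \cong \Ext^1_{\KA}(E^{(n)}, E^{(n)})$ with $T_{[E]}M_H(v) \cong \Ext^1_A(E, E)$ compatibly with $df$. Hence $df$ is an isomorphism at every point, so $f$ is an immersion and its image is a locally closed smooth subvariety of dimension $\dim M_H(v)$.

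Next I would upgrade ``immersion with image of the expected dimension'' to ``open and closed immersion onto a connected component.'' Openness of the image is immediate once $df$ is an isomorphism of tangent spaces and both $M_H(v)$ and $\cM$ are smooth at the relevant points of the same dimension — $M_H(v)$ is smooth of dimension $\langle v^2\rangle + 2$ by the theory of moduli on abelian surfaces, and the equality of dimensions with $\cM$ near the image follows from the $\Ext^1$ computation — so $f$ is étale onto an open subscheme; combined with injectivity on points this makes $f$ an open immersion. For closedness, I would use properness: $M_H(v)$ is projective (it parametrizes semistable sheaves, all of which are stable by $(\dagger)$, on a projective surface), hence $f(M_H(v))$ is a closed subset of $\cM$; an open and closed immersion onto a nonempty subset of a variety is exactly the inclusion of a union of connected components, and since $M_H(v)$ is connected (again standard for moduli on abelian surfaces, or one reduces to the connectedness via the Albanese), the image is a single connected component. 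This establishes the theorem.

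The main obstacle I expect is the precise bookkeeping in Step 1: pinning down exactly which $\PP^n$-functor structure from \cite{meac} applies to $\Theta$ (Meachan works with a Fourier–Mukai-type kernel and there may be a twist by a line bundle or a shift that must be tracked), and then reading off that the ``extra'' summands in $\RHom_{\KA}(\Theta E, \Theta E')$ beyond the single copy of $\RHom_A(E, E')$ contribute nothing in cohomological degrees $0$ and $1$. This requires knowing that those extra terms are built from $H^{>0}(A, E^\vee \otimes E' \otimes (\text{line bundle}))$ or similar, which vanish precisely because of condition $(\ddagger)$ and the stability of $E, E'$ (Serre duality plus $\mu$-stability controlling $\Ext^0$ and $\Ext^2$). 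Once that vanishing is in hand, everything else is formal: the tangent space comparison, openness from smoothness, and closedness from properness are all routine.
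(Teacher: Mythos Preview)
Your overall strategy---injectivity on closed points via vanishing of $\Hom$, tangent-space isomorphism via an $\Ext^1$ computation, then open-and-closed immersion from \'etaleness plus properness---matches the paper's approach (which packages the last step as a citation to \cite[Lemma~1.6]{rz}). The gap is in your identification of the $\PP$-functor. The integral functor with kernel $\cO_\cZ$, sending $E$ to $E^{(n)}$, is \emph{not} a $\PP$-functor and is not a twist of one: Meachan's $\PP^{n-1}$-functor \cite[Theorem~4.1]{meac} is the one with kernel $\IZ$, the ideal sheaf of the universal subscheme, and the two kernels sit in a distinguished triangle rather than differing by a line-bundle twist. So the clean formula $\RHom_{\KA}(\Theta E,\Theta E')\cong\RHom_A(E,E')\otimes H^\bullet(\PP^{n-1})$ does not apply to tautological bundles; indeed, if it did, there would be no cross terms and $(\ddag)$ would play no role.

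The tool the paper actually uses is a separate result, \cite[Theorem~6.9]{meac}, which gives explicit formulas for extensions between tautological sheaves on $\KA$. In degree zero it yields $\Hom_{\KA}(E_1^{(n)},E_2^{(n)})\cong\Hom_A(E_1,E_2)$, hence injectivity; in degree one it yields
\[
\Ext^1_{\KA}(E_1^{(n)},E_2^{(n)})\;\cong\;\Ext^1_A(E_1,E_2)\ \oplus\ H^1(A,E_1^\vee)\otimes H^0(A,E_2)\ \oplus\ H^0(A,E_1^\vee)\otimes H^1(A,E_2),
\]
and the last two summands vanish by $(\ddag)$ and Serre duality. Your instinct that ``cross terms'' exist and are killed by $(\ddag)$ is exactly right, but it is this formula---not the $\PP$-functor calculus---that produces them. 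Once you swap in the correct citation, the rest of your argument goes through unchanged.
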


\begin{proof}
	By \cite[Lemma 1.6.]{rz} we have to prove that $f$ is injective on closed points and that $\dim(T_{[E^{(n)}]}\mathcal{M})= \dim (T_{[E]} M_H(v))$ for all $[E] \in M_H(v)$.
	
	The main tool for achieving this is \cite[Theorem 6.9]{meac}, which is a formula for computing various extension groups between tautological sheaves. More exactly \cite[Theorem 6.9]{meac} implies for any $[E_1], [E_2] \in M_H(v)$ that
	$$ \Hom_{K_n(A)}(E_1^{(n)}, E_2^{(n)}) \cong \Hom_A(E_1, E_2) = \begin{cases}
		\CC, & \text{when } E_1 \cong E_2; \\
		0, & \text{when } E_1 \not\cong E_2.
	\end{cases} $$
	In particular, the case of $E_1 \not\cong E_2$ implies that $f$ is injective on closed points. Moreover, \cite[Theorem 6.9]{meac} also implies
	\begin{align*}
		&\phantom{=\ } \Ext^1_{K_n(A)}(E_1^{(n)}, E_2^{(n)}) \\
		&\cong \Ext^1_A(E_1,E_2) \oplus H^1(A, E_1^\vee) \otimes H^0(A, E_2) \oplus H^0(A, E_1^\vee) \otimes H^1(A, E_2) \\
		&= \Ext^1_A(E_1, E_2),
	\end{align*}
	where the last equality follows from $(\ddag)$ and the Serre duality on $A$. In particular, when $[E_1]$ and $[E_2]$ represent the same closed point $[E] \in M_H(v)$, we obtain that $\dim T_{E^{(n)}}(\cM) = \dim T_E (M_H(v))$ as desired.
\end{proof}

\section{Universal Bundles}
 
In this section we want to construct a second type of stable bundles on $\KA$. The basic idea is to use the Fourier-Mukai transform to find a fine moduli space of stable sheaves $M_{\widehat{H}}(w)$ on $\dA$ such that the generalized Kummer $K_{\widehat{H}}(w)$ in $M_{\widehat{H}}(w)$ is isomorphic to $\KA$. We restrict the universal family of $M_{\widehat{H}}(w)$ to $K_{\widehat{H}}(w)$ and study its fibers over a point $\da\in \dA$, which is a sheaf on $K_{\widehat{H}}(w)\cong \KA$.

\subsection{Stable sheaves on abelian surfaces}
Pick $n,r\in\mathbb{N}$ with $n\geqslant 2$ as well as $r\geqslant n+2$ and let $A$ be an abelian surface satisfying
\begin{equation*}
\NS(A)=\mathbb{Z}H\,\,\,\text{such that}\,\,\,H^2=2(n+r+1).
\end{equation*}
We denote the dual abelian surface by $\dA$. We have the Poincar\'{e} line bundle $\Pb$ on $A\times \dA$ which defines the classical Fourier-Mukai transform
\begin{equation*}
	\Phi: \Db(A) \rightarrow \Db(\dA),\,\,\, E\mapsto Rp_{*}(\Pb\otimes q^{*}(E))
\end{equation*}
where $p: A\times \dA \rightarrow \dA$ and $q: A\times \dA\rightarrow A$ are the projections.

\begin{remark}
Recall that an element $E\in\Db(A)$ is said to be $\mathrm{WIT}_i$ with respect to $\Phi$ if there is a coherent sheaf $G$ on $\dA$ such that $\Phi(E)\cong G[-i]$ in $\Db(\dA)$, where $G[-i]$ is the associated complex concentrated in degree $i$. We say that $E$ is $\mathrm{IT}_i$ if in addition $G$ is locally free, see \cite[Definition 2.3]{muk2}
\end{remark}

Using the canonical isomorphism $A\cong \ddA$ (given by the Poincar\'{e} bundle), we can also understand $\Pb$ as the Poincar\'{e} bundle on $\dA\times A$ up to switching the factors, see \cite[p.198, Remark 9.12]{huy}. This gives rise to the Fourier-Mukai transform
\begin{equation*}
	\widehat{\Phi}: \Db(\dA) \rightarrow \Db(A),\,\,\, F\mapsto Rq_{*}(\Pb\otimes p^{*}(F))
\end{equation*}
It is well known that $\det(\Phi(\mathcal{O}_A(H)))^{-1}$ defines the canonical polarization $\widehat{H}$ on $\dA$ and $\NS(\dA)=\mathbb{Z}\widehat{H}$, see for example \cite{lange}.
\medskip

Now we look at the Mukai vector
\begin{equation*}
	v=(1,H,r)
\end{equation*}
and denote the moduli space of $\mu_H$-semistable sheaves on $A$ with Mukai vector $v$ by $M_H(v)$. Then there is an isomorphism 
\begin{equation*}
\epsilon: \An\times \dA\stackrel{\cong}{\longrightarrow} M_H(v),\,\,\, (Z,\da)\longmapsto I_Z(H)\otimes \Pb_{\da}.
\end{equation*}

We compute $\langle v^2 \rangle=H^2-2r=2(n+r+1)-2r=2n+2$ and thus 
\begin{equation}\label{dim}
\dim(M_H(v))=2n+4.
\end{equation}
Furthermore by the choice of $r$ we have $r>\frac{\langle v^2 \rangle}{2}$, which by \cite[Corollary 3.3]{yosh} implies that every $E\in M_H(v)$ is $\mathrm{IT}_0$ with respect to $\Phi$ and that $\Phi(E)$ is a $\mu_{\widehat{H}}$-stable locally free sheaf on $\dA$ with Mukai vector
\begin{equation*}
	w=(r,-\widehat{H},1).
\end{equation*}
By \cite[Prop. 3.2, Cor. 3.3]{yosh} we get that the Fourier-Mukai transform induces an isomorphism
\begin{equation*}
	\Phi: M_H(v)\stackrel{\cong}{\longrightarrow} M_{\widehat{H}}(w).
\end{equation*}

\begin{remark}\label{fine}
The moduli space $M_{\widehat{H}}(w)$ is fine as $\mathrm{gcd}(r,\widehat{H}^2,1)=1$. Furthermore \cite[Corollary 3.3]{yosh} also shows that all sheaves classified by $M_{\widehat{H}}(w)$ are $\mu_{\widehat{H}}$-stable locally free sheaves.
\end{remark}

\subsection{Generalized Kummer varieties}
We recall the original construction of the generalized Kummer variety due to Beauville, see \cite[Sect. 7]{beau}: the group law on $A$ defines, via the symmetric power and the Hilbert-Chow morphism, a summation morphism:
\begin{equation*}
	\Sigma: \An \rightarrow A^{(n+1)}\rightarrow  A.
\end{equation*}
The generalized Kummer variety is then defined by $\KA:=\Sigma^{-1}(0_A)$.
\medskip

This construction was generalized by Yoshioka to moduli spaces of stable sheaves $M_H(v)$ on $A$, see \cite[Theorem 4.1., Definition 4.1.]{yosh}. We quickly summarize his main results: let $v$ be a primitive Mukai vector with $\langle v^2 \rangle+2\geqslant 6$ and $H$ be a generic polarization, i.e. $\overline{M_H(v)}=M_H(v)$. One finds that the Albanese morphism of $M_H(v)$ is given by 
\begin{equation*}
	\mathfrak{a}_v: M_H(v) \rightarrow A\times \dA
\end{equation*}
with
\begin{equation*}
	\mathfrak{a}_v(E)=\left(\det(\Phi(E))\otimes\det(\Phi(E_0))^{-1},\det(E)\otimes\det(E_0)^{-1} \right) 
\end{equation*}
for some fixed $E_0\in M_H(v)$. Then one can give the following definition:
\begin{definition}
	The generalized Kummer variety $K_H(v)$ in $M_H(v)$ is defined to be the fiber of $\mathfrak{a}_v$ over the point $(0_A,0_{\dA})$, i.e. $K_H(v)=\mathfrak{a}_v^{-1}((0_A,0_{\dA}))$.
\end{definition}
Note that we have $\dim(K_H(v))=2n$ by \eqref{dim}. Now assume that $v$ also satisfies all conditions from \cite[Corollary 3.3]{yosh}, that is the Fourier-Mukai transform induces an isomorphism $\Phi : M_H(v) \stackrel{\cong}{\longrightarrow} M_{\widehat{H}}(w)$. Under these circumstances not only are the moduli spaces isomorphic, but also the induced generalized Kummer varieties, as seen in the following lemma: 
\begin{lemma}
	The isomorphism $\Phi: M_H(v)\stackrel{\cong}{\longrightarrow} M_{\widehat{H}}(w)$ restricts to an isomorphism between generalized Kummer varieties $K_H(v)\stackrel{\cong}{\longrightarrow} K_{\widehat{H}}(w)$.
\end{lemma}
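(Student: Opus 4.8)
The plan is to show that the restricted Fourier--Mukai isomorphism $\Phi \colon M_H(v) \xrightarrow{\ \cong\ } M_{\widehat H}(w)$ carries the fiber $K_H(v) = \mathfrak a_v^{-1}(0_A, 0_{\dA})$ onto the fiber $K_{\widehat H}(w) = \mathfrak a_w^{-1}(0_{\dA}, 0_A)$, and for this it suffices to check that the two Albanese morphisms are intertwined by $\Phi$ up to a sign/automorphism of the target $A \times \dA$. First I would fix the reference sheaves $E_0 \in M_H(v)$ and $\Phi(E_0) \in M_{\widehat H}(w)$, so that both $\mathfrak a_v$ and $\mathfrak a_w$ are defined by the explicit formulas recalled in the text. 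Then I would compute, for an arbitrary $E \in M_H(v)$, the pair $\mathfrak a_w(\Phi(E))$ and compare it with $\mathfrak a_v(E)$.

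The key input is the compatibility of determinants under the Fourier--Mukai transform: using the canonical identification $A \cong \ddA$, one has $\widehat\Phi \circ \Phi \cong (-1_A)^\ast[-2]$ on $\Db(A)$ (and symmetrically on $\Db(\dA)$), so $\det(\widehat\Phi(\Phi(E))) = (-1_A)^\ast \det(E)$ and the ``cohomological Fourier--Mukai'' on Mukai vectors exchanges the $H^0 \oplus H^4$ part with (a twist of) the $H^2$ part. Concretely, the first component of $\mathfrak a_w(\Phi(E))$ is $\det(\widehat\Phi(\Phi(E))) \otimes \det(\widehat\Phi(\Phi(E_0)))^{-1}$, which by the above equals $(-1_A)^\ast\bigl(\det(E) \otimes \det(E_0)^{-1}\bigr)$; and the second component $\det(\Phi(E)) \otimes \det(\Phi(E_0))^{-1}$ is literally the first component of $\mathfrak a_v(E)$. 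Hence there is an automorphism $\phi$ of $A \times \dA$ — built from the coordinate swap $A \times \dA \cong \dA \times A$ composed with $(-1)$ on the $A$-factor — such that $\mathfrak a_w \circ \Phi = \phi \circ \mathfrak a_v$. Since $\phi$ fixes the origin $(0,0)$, taking fibers over the origin gives $\Phi(K_H(v)) = K_{\widehat H}(w)$, and $\Phi$ being an isomorphism of the ambient moduli spaces, its restriction is an isomorphism of the Kummer varieties.

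I would carry this out in three steps: (1) record the precise form of $\mathfrak a_w$ for the Mukai vector $w = (r, -\widehat H, 1)$ on $\dA$, noting that its reference sheaf may be taken to be $\Phi(E_0)$ since $\Phi$ is an isomorphism of moduli spaces; (2) invoke the inversion theorem for the Fourier--Mukai transform on abelian surfaces (e.g. \cite[Prop.~9.19, (9.5)]{huy} or Mukai's original \cite{muk2}) to express $\det(\widehat\Phi(\Phi(E)))$ and thereby identify both components of $\mathfrak a_w(\Phi(E))$ with the components of $\mathfrak a_v(E)$ up to the explicit automorphism $\phi$; (3) conclude that $\Phi$ maps the fiber over $(0,0)$ to the fiber over $\phi^{-1}(0,0) = (0,0)$.

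The main obstacle is bookkeeping in step (2): one must be careful about which of the two Fourier--Mukai transforms ($\Phi$ versus $\widehat\Phi$) is used in each Albanese formula, about the sign $(-1)^\ast$ coming from $\widehat\Phi \circ \Phi$, and about the identification $A \cong \ddA$ used implicitly in \cite{yosh} — getting these right is what makes $\phi$ an honest automorphism fixing the origin rather than merely a bijection. Everything else is formal once the determinant compatibility is in hand.
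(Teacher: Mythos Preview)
Your proposal is correct and follows essentially the same argument as the paper: both compute $\mathfrak a_w(\Phi(E))$ via the inversion formula $\widehat\Phi(\Phi(E))\cong(-1_A)^\ast E$ (Mukai, \cite[Corollary~2.4]{muk2}) to produce a commutative square intertwining $\mathfrak a_v$ and $\mathfrak a_w$ by an automorphism of $A\times\dA$ that fixes the origin, and then restrict to the fiber over $(0,0)$. The paper makes the automorphism explicit as $\varphi=1_A\times(-1_A)^\ast$ after first absorbing the coordinate swap $\dA\times\ddA\cong A\times\dA$ into its definition of $\mathfrak a_w$, which is exactly the bookkeeping you anticipate in your step~(2).
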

\begin{proof}
We first note that the Albanese morphism $\mathfrak{a}_w: M_{\widehat{H}}(w) \rightarrow \dA \times \ddA$ can be understood as a morphism $\mathfrak{a}_w: M_{\widehat{H}}(w) \rightarrow A \times \dA$ after identifying $A\cong \ddA$ and switching the factors. It is then given by
\begin{equation*}
	\mathfrak{a}_w(F)=(\det(F)\otimes\det(F_0)^{-1},\det(\widehat{\Phi}(F))\otimes\det(\widehat{\Phi}(F_0))^{-1})
\end{equation*}
with $F_0=\Phi(E_0)\in M_{\widehat{H}}(w)$.
\medskip

Using the isomorphism $\varphi: A\times \dA \rightarrow A\times \dA$ given by $\varphi \coloneqq 1_A\times (-1_A)^{*}$ we claim that the following diagram commutes:
\begin{equation*}
	\begin{tikzcd}
		M_H(v) \arrow[r,"\Phi"]\arrow[d, swap, "\mathfrak{a}_v"] & M_{\widehat{H}}(w)\arrow[d, swap, "\mathfrak{a}_w"]\\
		A\times \dA \arrow[r,"\varphi"] & A\times \dA.
	\end{tikzcd}
\end{equation*}
To see this we simply note that since every $E\in M_H(v)$ is $\mathrm{IT}_0$ with respect to $\Phi$, we have the following isomorphism by \cite[Corollary 2.4.]{muk2}:
\begin{equation*}
 \widehat{\Phi}(\Phi(E))\cong(-1_A)^{*}E.
\end{equation*}
Using $\varphi((0_A,0_{\dA}))=(0_A,0_{\dA})$ and the commutativity, we see that $\Phi$ restricts to an isomorphism $K_H(v)\cong K_{\widehat{H}}(w)$.
\end{proof}

\subsection{Construction of a universal family}
In this section we want to construct a universal family for the generalized Kummer variety $K_{\widehat{H}}(w)$. For this we first note that $M_H(v)$ is a fine moduli space, that is there is a universal family on the product $A\times M_H(v)$. Denote the restriction of the universal family along the closed immersion $A\times K_H(v) \hookrightarrow A\times M_H(v)$ by $\mathcal{E}$.

\begin{remark}
For the Mukai vector $v=(1,H,r)$ we have the following isomorphism:
\begin{equation*}
	\KA \stackrel{\cong}{\longrightarrow} K_H(v),\,\,\, [Z]\longmapsto I_Z(H).
\end{equation*}
By making a careful choice of the line bundles on $A$ and $\dA$ representing $\det(E_0)$ and $\det(\Phi(E_0))$ as in \cite[\S 3.1]{gul}, an explicit computation similar to \cite[Lemma 3.2]{gul} shows that there is a commutative diagram
\begin{equation*}
	\begin{tikzcd}
		\An\times \dA \arrow[r,"\epsilon"]\arrow[d, swap, "\Sigma\times 1_{\dA}"] & M_{H}(v)\arrow[d, "\mathfrak{a}_v"]\\
		A\times \dA \arrow[r,"\rho"] & A\times \dA 
	\end{tikzcd}
\end{equation*}
with the isomorphism
\begin{equation*}
    \rho: A\times \dA \stackrel{\cong}{\longrightarrow} A\times\dA,\,\,\, (a,\da)\longmapsto (-a+\phi_{\widehat{H}^{-1}}(\da),\da).
\end{equation*}
Again, as $\rho(0_A,0_{\dA})=(0_A,0_{\dA})$, we find that the isomorphism $\epsilon$ restricts to an isomorphism between the fibers of $\Sigma\times 1_{\dA}$ and $\mathfrak{a}_v$ over $(0_A,0_{\dA})$. It remains to note that these fibers are $K_n(A)$ and $K_H(v)$ by definition.
\end{remark}

Using the last remark we will, from now on, understand the universal family $\mathcal{E}$ on $A\times K_H(v)$ as a family on $A\times \KA$, which is easily seen to be given by
\begin{equation*}
	\mathcal{E}=\IZ\otimes \pi_1^{*}\mathcal{O}_A(H)
\end{equation*}
where $\pi_1: A\times \KA \rightarrow A$ is the projection and $\IZ$ is the universal ideal sheaf on $A\times\KA$.
\medskip

We now define a family $\mathcal{U}$ on $\dA\times\KA$ using the Fourier-Mukai transform relative to $\KA$ following \cite[Sect. 1]{muk}. For this we introduce some notation:
\begin{equation*}
	\begin{tikzcd}[column sep = large, row sep = large]
		& & A\times \dA\times \KA \arrow[ld,swap,"p_{A}"]\arrow[d,"q"]\arrow[rd, "p_{\dA}"] & \\ A &  A\times \KA \arrow[l,swap,"\pi_1"]  & A\times \dA & \dA\times \KA 		
	\end{tikzcd}
\end{equation*}
Then the relative Fourier-Mukai transform is defined by:
\begin{equation*}
\Psi: \Db(A\times \KA)\rightarrow \Db(\dA\times \KA),\,\,\,\, \mathcal{F}\mapsto R{p_{\dA}}_{*}(q^{*}\Pb\otimes p_A^{*}(\mathcal{F}))
\end{equation*}
Using this we define the following family on $\dA\times \KA$:
\begin{equation*}
	\mathcal{U}:=\Psi(\mathcal{E}).
\end{equation*}

The restriction of $\mathcal{E}$ to the fiber over $[Z]\in \KA$ is just $I_Z(H)$ which is $\mathrm{IT}_0$ with respect to $\Phi$, implying that $\mathcal{U}$ is $\mathrm{WIT}_0$ and that $\Psi(\mathcal{E})$ commutes with an arbitrary base change $T\rightarrow \KA$ by \cite[Theorem 1.6.]{muk}. By choosing $T=\{[Z]\}$ for some $[Z]\in \KA$ we see that there is an isomorphism
\begin{equation*}
	\mathcal{U}\otimes\mathcal{O}_{[Z]}=\Psi(\mathcal{E})\otimes \mathcal{O}_{[Z]} \cong \Phi(\mathcal{E}\otimes \mathcal{O}_{[Z]}) \cong \Phi(I_Z(H)).
\end{equation*}
As $\Phi(I_Z(H))$ is locally free the last equation shows that $\mathcal{U}$ is locally free by \cite[Lemma 2.1.7]{HL}.
\medskip

Furthermore, since $H^1(A,I_Z(H))=0$ for all $[Z]\in K_n(A)$ we see, using standard results from the theory of cohomology and base change, that for every morphism $\alpha: S \rightarrow \dA\times K_n(A)$ we get a diagram
\begin{equation}\label{basech}
	\begin{tikzcd}
		A & A\times S \arrow[l,swap ,"t_1"] \arrow[r,"\beta"]\arrow[d, swap, "t_2"] & A\times \dA\times \KA\arrow[d, "p_{\dA}"]\arrow[r, "p_A"] & A\times \KA\\
		& S \arrow[r,"\alpha"] & \dA\times \KA & 
	\end{tikzcd}
\end{equation}
together with an isomorphism:
\begin{align*}
	\alpha^{*}\mathcal{U}&=\alpha^{*}(R{p_{\dA}}_{*}(q^{*}\Pb\otimes p_A^{*}\mathcal{E}))\\
	&\cong R{t_2}_{*}\beta^{*}(q^{*}\Pb\otimes p_A^{*}\mathcal{E}).
\end{align*}
We sum up the results from this subsection in the following lemma:
\begin{lemma}
The family $\mathcal{U}=\Psi(\mathcal{E})$ on $\dA\times K_n(A)$ is a locally free universal family for $K_{\widehat{H}}(w)$, namely, its classifying morphism $K_n(A) \rightarrow M_{\widehat{H}}(w)$ induces the isomorphism 
\begin{equation*}
  K_n(A)\stackrel{\cong}{\longrightarrow} K_{\widehat{H}}(w),\,\,\,[Z]\longmapsto \Phi(I_Z(H)).  
\end{equation*}
\end{lemma}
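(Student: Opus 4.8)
The plan is to verify the three assertions in the lemma by unwinding the construction of $\mathcal{U}$ and invoking the results just proven. First I would check local freeness of $\mathcal{U}$: this has essentially been done in the preceding discussion, where we saw that $\mathcal{U} \otimes \mathcal{O}_{[Z]} \cong \Phi(I_Z(H))$ is locally free for every $[Z] \in \KA$, so \cite[Lemma 2.1.7]{HL} applies. The only mild point to note here is that this step uses that $\mathcal{E}$ restricted to each fiber $\{[Z]\} \times \KA$ is $I_Z(H)$, which is $\mathrm{IT}_0$, hence $\Psi(\mathcal{E})$ is $\mathrm{WIT}_0$ and commutes with arbitrary base change by \cite[Theorem 1.6]{muk}.

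Next I would identify the classifying morphism. Since $M_{\widehat{H}}(w)$ is a fine moduli space (Remark \ref{fine}), the locally free family $\mathcal{U}$ on $\dA \times \KA$ induces a morphism $g \colon \KA \to M_{\widehat{H}}(w)$ characterized by the property that $\mathcal{U}_{[Z]} \cong $ the sheaf on $\dA$ parametrized by $g([Z])$. By the base change computation above, $\mathcal{U}_{[Z]} \cong \Phi(I_Z(H))$. On the other hand, under the isomorphism $\KA \xrightarrow{\sim} K_H(v)$, $[Z] \mapsto I_Z(H)$, followed by $\Phi \colon M_H(v) \xrightarrow{\sim} M_{\widehat{H}}(w)$ restricting to $K_H(v) \xrightarrow{\sim} K_{\widehat{H}}(w)$ (the previous lemma), the point $[Z]$ maps precisely to the point of $M_{\widehat{H}}(w)$ parametrizing $\Phi(I_Z(H))$. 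Hence $g$ agrees on closed points with the composite isomorphism $\KA \xrightarrow{\sim} K_H(v) \xrightarrow{\sim} K_{\widehat{H}}(w)$; since both are morphisms of varieties and $M_{\widehat{H}}(w)$ is separated, $g$ coincides with this composite and is therefore an isomorphism onto $K_{\widehat{H}}(w)$, sending $[Z] \mapsto \Phi(I_Z(H))$.

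Finally I would record that $\mathcal{U}$ deserves to be called a universal family for $K_{\widehat{H}}(w)$: since $\mathcal{U}$ is the restriction to $\dA \times K_{\widehat{H}}(w)$ (via the isomorphism $g$) of a family flat over $K_{\widehat{H}}(w)$ whose fibers are the parametrized sheaves, and $g$ is an isomorphism, $\mathcal{U}$ pulls back to $(g \times \id)$ applied to the restriction of the universal family on $\dA \times M_{\widehat{H}}(w)$, up to a twist by a line bundle pulled back from $\KA$; this ambiguity is harmless for the purposes of the paper, and I would simply state that $\mathcal{U}$ is \emph{a} universal family.

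The step I expect to be the main obstacle is pinning down that $g$ really equals the composite isomorphism as a \emph{morphism}, not merely on closed points — in particular, making sure the identification $\mathcal{U}_{[Z]} \cong \Phi(I_Z(H))$ is compatible with the base-change isomorphisms in diagram \eqref{basech}, so that the induced $g$ factors through $K_{\widehat{H}}(w)$ and matches $\Phi \circ (\text{Hilb} \xrightarrow{\sim} K_H(v))$ scheme-theoretically. This requires a careful bookkeeping of the Fourier–Mukai transform relative to $\KA$ versus the absolute one fiberwise, which is exactly what \cite[Theorem 1.6]{muk} and \cite[Prop. 3.2, Cor. 3.3]{yosh} are designed to handle; once those compatibilities are cited correctly, the rest is formal.
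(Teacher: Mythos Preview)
Your proposal is correct and follows essentially the same approach as the paper: in fact, the paper gives no separate proof for this lemma but merely introduces it with ``We sum up the results from this subsection in the following lemma,'' so the content of the proof is precisely the preceding discussion (local freeness via $\mathcal{U}\otimes\mathcal{O}_{[Z]}\cong\Phi(I_Z(H))$ and \cite[Lemma 2.1.7]{HL}, base change via \cite[Theorem 1.6]{muk}, and the identification of the classifying map), which you have reproduced accurately. Your worry about matching $g$ with the composite isomorphism as a \emph{morphism} rather than just on closed points is a point the paper does not address explicitly either; your proposed resolution via the cited compatibilities is the right one and more careful than what the paper writes.
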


\subsection{Stability of the wrong-way fibers}
In this section we want to study the stability of the wrong-way fibers of $\mathcal{U}$, that is the fibers over points  $\da\in\dA$. For this we choose in the diagram \eqref{basech} the base change along the inclusion $j_{\da}$ of the fiber over $\da$ of the projection $\dA\times \KA\rightarrow \dA$, that is
\begin{equation}
	\begin{tikzcd}
		A & A\times \KA \arrow[l,swap ,"t_1"] \arrow[r,"i_{\da}"]\arrow[d, swap, "t_2"] & A\times \dA\times \KA\arrow[d, "p_{\dA}"]\arrow[r, "p_A"] & A\times \KA\\
		& \KA \arrow[r,"j_{\da}"] & \dA\times \KA & 
	\end{tikzcd}
\end{equation}
where the morphisms $j_{\da}$ and $i_{\da}$ are given on closed points by 
\begin{align*}
	j_{\da}: \KA &\hookrightarrow \dA\times \KA,\,\,\,\, [Z]\mapsto (\da,[Z])\\
	i_{\da}: A\times \KA &\hookrightarrow A\times \dA\times \KA,\,\,\,\, (a,[Z])\mapsto (a,\da,[Z]).
\end{align*}
Going through the base change we see that we can describe the wrong-way fibers in the following way:
\begin{align*}
	\mathcal{U}_{\da}=j_{\da}^{*}\mathcal{U}&=j_{\da}^{*}(R{p_{\dA}}_{*}(q^{*}\Pb\otimes p_A^{*}\mathcal{E}))\\
&\cong R{t_2}_{*}i_{\da}^{*}(q^{*}\Pb\otimes p_A^{*}(\IZ\otimes \pi_1^{*}\mathcal{O}_A(H)))\\
&\cong R{t_2}_{*}(\IZ\otimes t_1^{*}(\Pb_{\da}(H))).
\end{align*}

We recall the integral functor
\begin{equation}\label{theta}
	\Theta: \Db(A)\rightarrow \Db(\KA),\,\,\,E\mapsto R{t_2}_{*}(\IZ\otimes t_1^{*}E),
\end{equation}
which is a $\mathbb{P}^{n-1}$-functor by \cite[Theorem 4.1]{meac}.
\medskip

We see that the wrong-way fiber is given by 
\begin{equation}\label{eqn:U}
\mathcal{U}_{\da}=\Theta(\Pb_{ \da}(H))
\end{equation}
and sits in the exact sequence:
\begin{equation}\label{exsequ}
	\begin{tikzcd}
		0 \arrow[r] & \mathcal{U}_{\da} \arrow[r] & R{t_2}_{*}(t_1^{*}(\Pb_{\da}(H))) \arrow[r] & R{t_2}_{*}(\mathcal{O}_{\mathcal{Z}}\otimes t_1^{*}(\Pb_{\da}(H))) \arrow[r] & 0
	\end{tikzcd}
\end{equation}
We also have:
\begin{equation*}
	R{t_2}_{*}(t_1^{*}(\Pb_{\da}(H)))\cong H^0(\Pb_{\da}(H))\otimes \mathcal{O}_{\KA}
\end{equation*}
by cohomology and base change. Furthermore
\begin{equation*}
	R{t_2}_{*}(\mathcal{O}_{\mathcal{Z}}\otimes t_1^{*}(\Pb_{\da}(H)))=(\Pb_{\da}(H))^{(n)}
\end{equation*}
is the tautological bundle of rank $n+1$ on $\KA$ induced by $\Pb_{\da}(H)$.

A quick diagram chase shows that we have
\begin{equation*}
(\Pb_{\da}(H))^{(n)}\cong \iota^{*}((\Pb_{\da}(H))^{[n+1]})
\end{equation*}
where $\iota: \KA \hookrightarrow A^{[n+1]}$ is the inclusion and $(\Pb_{\da}(H))^{[n+1]}$ is the tautological bundle induced by $\Pb_{\da}(H)$ on $A^{[n+1]}$.

For the next results we recall that we have $\NS(\KA)=\NS(A)_K\oplus \mathbb{Z}\delta$. Here $D_K$ is the divisor class on $\KA$ induced by the divisor class $D$ on $A$ and $\delta$ is a divisor class on $\KA$ such that $2\delta=[E]$ where $E$ is the exceptional divisor of the Hilbert-Chow morphism $\KA \rightarrow S_n(A)$. In our case this reads
\begin{equation*}
	\NS(\KA)=\ZZ H_K\oplus\ZZ\delta.
\end{equation*}
\begin{remark}
Note that we can also write $\NS(\KA)=\iota^{*}\NS(A^{[n+1]})$, with
\begin{equation*}
\NS(A^{[n+1]})=\NS(A)_{n+1}\oplus \mathbb{Z}\Delta\oplus{\Sigma}^{*}\NS(A).
\end{equation*}
where $\NS(A)_{n+1}$ are the divisor classes on $A^{[n+1]}$ induced from $A$ and $\Delta$ is the class such that $2\Delta$ is the class of the exceptional divisor of $A^{[n+1]}\rightarrow A^{(n+1)}$. We have $\iota^{*}H_{n+1}=H_K$ and $\iota^{*}\Delta=\delta$.
\end{remark}

\begin{lemma}
\label{lem:Chernclass}
We have $c_1(\mathcal{U}_{\da})=-H_K+\delta$.
\end{lemma}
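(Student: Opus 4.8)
The plan is to extract $c_1(\mathcal{U}_{\da})$ directly from the exact sequence \eqref{exsequ}, thereby reducing the problem to computing the first Chern class of a tautological bundle, and then to combine the classical formula for tautological bundles on the Hilbert scheme with the restriction map $\iota^*$.

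First I would note that in \eqref{exsequ} the middle term $R{t_2}_*(t_1^*(\Pb_{\da}(H)))$ is the trivial bundle $H^0(\Pb_{\da}(H)) \otimes \mathcal{O}_{\KA}$, so it has trivial first Chern class. Since $c_1$ is additive along short exact sequences, \eqref{exsequ} yields $c_1(\mathcal{U}_{\da}) = -\,c_1\bigl((\Pb_{\da}(H))^{(n)}\bigr)$. (As a sanity check, $\Pb_{\da}(H)$ is an ample line bundle on $A$ with $h^0 = \chi = \tfrac12 H^2 = n+r+1$ by Riemann--Roch, so $\mathcal{U}_{\da}$ has rank $r$, as expected; this is not needed below.)

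Next I would use the identification $(\Pb_{\da}(H))^{(n)} \cong \iota^*\bigl((\Pb_{\da}(H))^{[n+1]}\bigr)$ recorded above, together with the standard description of the first Chern class of a tautological bundle: for any line bundle $L$ on the surface $A$ one has $c_1\bigl(L^{[n+1]}\bigr) = c_1(L)_{n+1} - \Delta$ in $\NS(\An)$, where $c_1(L)_{n+1}$ is the class induced from $c_1(L)$ and $2\Delta$ is the class of the exceptional divisor of $\An \to A^{(n+1)}$ (this follows, e.g., from Grothendieck--Riemann--Roch applied to the finite flat universal morphism, or from the standard references on Hilbert schemes of surfaces). Since $\Pb_{\da} \in \Pic^0(A)$, we have $c_1(\Pb_{\da}(H)) = H$ in $\NS(A)$, hence $c_1\bigl((\Pb_{\da}(H))^{[n+1]}\bigr) = H_{n+1} - \Delta$. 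Applying $\iota^*$ and using $\iota^* H_{n+1} = H_K$ and $\iota^*\Delta = \delta$, we obtain $c_1\bigl((\Pb_{\da}(H))^{(n)}\bigr) = H_K - \delta$, and therefore $c_1(\mathcal{U}_{\da}) = -(H_K - \delta) = -H_K + \delta$.

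I do not expect a real obstacle here; the only delicate point is keeping track of conventions and signs — namely that $\delta$ is normalized so that $2\delta$ is the exceptional divisor of $\KA \to S_n(A)$ (compatibly with the $2\Delta$ convention on $\An$, under which the tautological formula reads $c_1(L^{[n+1]}) = c_1(L)_{n+1} - \Delta$), and that $c_1(\Pb_{\da}) = 0$ since $\Pb_{\da}$ is a degree-zero line bundle. The remaining verifications are routine.
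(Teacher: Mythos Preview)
Your proposal is correct and follows essentially the same argument as the paper: use the exact sequence \eqref{exsequ} to reduce to $-c_1\bigl((\Pb_{\da}(H))^{(n)}\bigr)$, then pull back the standard tautological formula $c_1(L^{[n+1]}) = c_1(L)_{n+1} - \Delta$ (the paper cites \cite[Lemma~1.5]{wandel}) via $\iota$. Your extra remarks about $\Pb_{\da}\in\Pic^0(A)$ and the rank check are fine but not needed.
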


\begin{proof}
By the exact sequence \eqref{exsequ} we get:
\begin{align*}
	c_1(\mathcal{U}_{\da})&=-c_1((\Pb_{\da})^{(n)})\\
	&=-c_1(\iota^{*}((\Pb_{\da}(H))^{[n+1]}))\\
	&=-\iota^{*}c_1((\Pb_{\da}(H))^{[n+1]})\\
	&=-\iota^{*}(H_{n+1}-\Delta)=-H_K+\delta
\end{align*}
where we use \cite[Lemma 1.5]{wandel} in the second to last step.
\end{proof}

To compute slopes on $\KA$ we need the following intersection numbers, which can, for example, be found in \cite[p.9]{britze}:

\begin{lemma}\label{intersect}
	For the classes $H_K$ and $\delta$ from $\NS(\KA)$ we have:
	\begin{itemize}
		\setlength\itemsep{1em}
		\item $H_K^{2n}=\frac{(n+1)(2n)!}{(n)!2^{n}}(H^2)^{n} > 0$
		\item $H_K^{2n-1}\delta=0$.
	\end{itemize}
\end{lemma}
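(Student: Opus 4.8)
The plan is to pull everything back to the Hilbert scheme $A^{[n+1]}$ via the inclusion $\iota \colon \KA \hookrightarrow A^{[n+1]}$, and then further to the product $A^{n+1}$ via the $\fS_{n+1}$-quotient, where the intersection theory is elementary. Recall from the remarks above that $\iota^\ast H_{n+1} = H_K$ and $\iota^\ast \Delta = \delta$, so it suffices to compute $\iota^\ast(H_{n+1}^{2n})$ and $\iota^\ast(H_{n+1}^{2n-1}\Delta)$ against the fundamental class of $\KA$. The key input is the well-known description of $\KA$ as a fiber of the isotrivial fibration $\Sigma \colon A^{[n+1]} \to A$: since $\Sigma$ is a smooth morphism with fiber $\KA$, translation by $A$ identifies $A^{[n+1]}$ with $A \times \KA$ up to isogeny, and in particular for any divisor classes $D_1, \dots, D_{2n}$ on $A^{[n+1]}$ one has
$$
(2n)!\cdot (n+1)^2 \cdot \bigl(\iota^\ast D_1 \cdots \iota^\ast D_{2n}\bigr) = \bigl(D_1 \cdots D_{2n} \cdot \Sigma^\ast[\pt_A]^{?}\bigr)
$$
— more carefully, one integrates the $2n$-fold product of the $D_i$ together with two generators of $H^2(A)$ pulled back via $\Sigma$ over $A^{[n+1]}$, which recovers $2(n+1)$ times the intersection on a fiber (the factor accounting for $\deg(\Sigma\times\text{anything})$; I will fix the exact constant by the normalization below). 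Rather than track this constant abstractly, I would instead pass directly to $A^{n+1}$ and use the divisor $H_P = \sum_{i=0}^n h_i$ introduced in the first section, which satisfies $\sigma^\ast H_{n+1}|_{\text{off diagonals}} = H_P$ and for which equation \eqref{eqn:MultiCover} already records the multiplicity $(n+1)!$ relating integrals on $\KA$ to integrals on $P_n(A)$.

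Concretely, for the first identity I would compute $\int_{P_n(A)} H_P^{2n}$ directly. Writing $H_P = \sum_{i=0}^n h_i$ with $h_i = \tau^\ast q_i^\ast H$, and using that $h_i^2 \cdot h_i = 0$ (since $q_i^\ast$ of a divisor class on a surface has vanishing cube) together with $\int_{A} H^2 = H^2$, the multinomial expansion collapses: only terms where each exponent $k_i \in \{0,1,2\}$ survive, and because $P_n(A) \subset A^{n+1}$ is cut out by one translation-equivariant equation (so $\int_{P_n(A)} \tau^\ast(\alpha) = \int_{A^{n+1}} \tau_\ast[P_n(A)]\cdot \alpha$ with $\tau_\ast[P_n(A)]$ the class of a graph, which projects isomorphically under forgetting one coordinate), one reduces to a sum of products $\prod \int_A H^{k_i}$ over compositions of $2n$ into $n+1$ parts each at most $2$. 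A short count of such compositions (equivalently, the coefficient of $x^{2n}$ in $(1+x+x^2)^{n+1}$, refined by which parts equal $2$) yields a closed form; dividing by $(n+1)!$ via \eqref{eqn:MultiCover} (applied with the obvious modification from $H_K^{2n-1}$ to $H_K^{2n}$, i.e. the codimension-$2$ comparison of $\iota^\ast$ and the boundary construction) should produce exactly $\frac{(n+1)(2n)!}{n!\,2^n}(H^2)^n$. Positivity is then immediate since $H^2 > 0$.

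For the second identity, $H_K^{2n-1}\delta = 0$, the cleanest route is to observe that $\delta$ restricted to the complement of the big diagonal is trivial: on $\KA_\circ$ the exceptional divisor is empty, so $\delta|_{\KA_\circ} = 0$, while $H_K|_{\KA_\circ}$ is the descent of $H_P$. Hence $H_K^{2n-1}\delta$ is supported on the boundary, which has codimension $2$ in $\KA$; intersecting a codimension-$2$ cycle with a curve class $H_K^{2n-1}$ that (by the density argument of Lemma \ref{lem:slope-stable}, or simply by generic smoothness of complete intersections of $|H_K|$) can be represented by curves sweeping out a dense open subset avoiding the boundary, forces the intersection number to vanish. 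Alternatively and more robustly: $\delta$ is a half-integral class with $2\delta = [E]$, and $[E]$ is the exceptional divisor of $h \colon \KA \to S_n(A)$; since $H_K = h^\ast H_S$ is pulled back from $S_n(A)$, the projection formula gives $H_K^{2n-1}\cdot [E] = (H_S^{2n-1}) \cdot h_\ast[E]$, and $h_\ast[E] = 0$ because $h$ contracts $E$ (the image has dimension $< 2n-1$). Therefore $H_K^{2n-1}\delta = 0$.

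I expect the only genuinely technical point to be pinning down the combinatorial constant $\frac{(n+1)(2n)!}{n!\,2^n}$ in the first identity — that is, carrying out the multinomial expansion of $H_P^{2n}$ and correctly bookkeeping (i) the pushforward multiplicity from the graph $\tau(P_n(A)) \subset A^{n+1}$, and (ii) the $(n+1)!$ factor from \eqref{eqn:MultiCover}. Everything else — the vanishing $H_K^{2n-1}\delta = 0$ and the positivity claim — follows formally from the projection formula and the fact that $h$ contracts the exceptional divisor. Since the constant is already recorded in \cite[p.9]{britze}, in the write-up I would simply cite that reference for the value and include only the structural argument sketched above, or reproduce the count in a line or two.
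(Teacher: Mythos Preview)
The paper itself gives no proof of this lemma --- it simply cites \cite[p.9]{britze} for both identities. Your proposal therefore goes well beyond the paper, and your second (projection-formula) argument for $H_K^{2n-1}\delta=0$ is correct and is the standard way to see it: $H_K=h^\ast H_S$, so $H_K^{2n-1}\cdot[E]=H_S^{2n-1}\cdot h_\ast[E]=0$ because $h$ contracts $E$ to a locus of dimension $2n-2$. Your first argument for that identity, however, has a gap: from $2\delta=[E]$ and $E\cap\KA_\circ=\emptyset$ you only get $2\delta|_{\KA_\circ}=0$, not $\delta|_{\KA_\circ}=0$, and in any case ``supported on the boundary'' is not a well-defined statement about a numerical intersection number. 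Drop that paragraph and keep the projection-formula version.

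For $H_K^{2n}$, your approach via $P_n(A)$ and \eqref{eqn:MultiCover} is sound and does produce the constant if you finish the count. Taking $F$ to be a line bundle with $c_1(F)=H_K$ in \eqref{eqn:MultiCover} gives $(n+1)!\,H_K^{2n}=\int_{P_n(A)}H_P^{2n}$. In the multinomial expansion of $(\sum h_i)^{2n}$, only exponent vectors with all $k_i\leqslant 2$ survive, and since $\sum k_i=2n$ with $n+1$ parts, exactly one $k_i$ is $0$ (giving $n+1$ terms with coefficient $\frac{(2n)!}{2^n}$) or exactly two are $1$ (giving $\binom{n+1}{2}$ terms with coefficient $\frac{(2n)!}{2^{n-1}}$). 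Each monomial integrates to $(H^2)^n$ on $P_n(A)$ --- for the second type one uses that on the $2$-dimensional fiber $h_{n-1}$ and $h_n$ both restrict to classes numerically equivalent to $H$. Summing gives $\int_{P_n(A)}H_P^{2n}=\frac{(2n)!}{2^n}(H^2)^n(n+1)^2$, and dividing by $(n+1)!$ yields exactly $\frac{(n+1)(2n)!}{n!\,2^n}(H^2)^n$. So the combinatorics is short enough to include rather than defer to Britze. Your opening paragraph about an isogeny $A\times\KA\to A^{[n+1]}$ is muddled (the degree is $(n+1)^4$, and the constants you wrote are not right); since you abandon it for the $P_n(A)$ route anyway, delete it.
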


\begin{lemma}\label{equiv-NS}
There is an isomorphism
\begin{equation*}
\NS(A)\stackrel{\cong}{\longrightarrow} \NS(P_n(A))^{\mathfrak{S}_{n+1}},\,\,\,H \longmapsto \sum\limits_{i=0}^n \tau^{*}q_i^\ast H.
\end{equation*}
\end{lemma}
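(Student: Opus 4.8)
The plan is to transport the statement to $A^{n}$, where both sides become transparent. The morphism $A^{n+1}\to A^{n}$ forgetting the $0$-th coordinate restricts on $P_n(A)$ to an isomorphism of abelian varieties $P_n(A)\xrightarrow{\sim}A^{n}$ — the identification already used in the proof of Proposition \ref{thm:boundary-stable}. Under it $q_i\circ\tau$ becomes the $i$-th projection $\mathrm{pr}_i\colon A^{n}\to A$ for $1\le i\le n$, and $q_0\circ\tau$ becomes $-s$, where $s\colon A^{n}\to A$ is the summation homomorphism; the subgroup $\mathfrak{S}_n\subset\mathfrak{S}_{n+1}$ fixing the index $0$ acts on $A^{n}$ by permuting factors, while the transposition $(0\,1)$ acts by $g\colon(a_1,\dots,a_n)\mapsto(-\sum_j a_j,a_2,\dots,a_n)$. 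Since $\mathfrak{S}_n$ and $(0\,1)$ generate $\mathfrak{S}_{n+1}$, it suffices to describe the classes on $A^{n}$ invariant under these; moreover the map in the statement becomes $H\mapsto s^{\ast}H+\sum_{i=1}^{n}\mathrm{pr}_i^{\ast}H$ (using $(-1)^{\ast}=\mathrm{id}$ on $\NS(A)$), which is manifestly $\mathfrak{S}_{n+1}$-invariant. I will use the standard description
$$\NS(A^{n})\;=\;\bigoplus_{i=1}^{n}\mathrm{pr}_i^{\ast}\NS(A)\ \oplus\ \bigoplus_{1\le i<j\le n}\Hom(A,\dA)_{ij},$$
the summand $\Hom(A,\dA)_{ij}$ recording the homomorphism determined by a class on the $(i,j)$-th factor, together with the standard facts that $(-1)^{\ast}$ acts trivially on $\NS(A)$ and that $\mu\mapsto\phi_\mu$ identifies $\NS(A)$ with the symmetric homomorphisms $A\to\dA$; under the decomposition, $s^{\ast}\mu=\sum_i\mathrm{pr}_i^{\ast}\mu+\sum_{i<j}(\text{class of }\phi_\mu\text{ in }\Hom(A,\dA)_{ij})$.

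First I would compute $\NS(A^{n})^{\mathfrak{S}_n}$. Factor permutations permute the $\mathrm{pr}_i^{\ast}\NS(A)$ and permute the mixed summands transitively, the stabilizer of $\{1,2\}$ acting on $\Hom(A,\dA)_{12}$ through $(1\,2)$, i.e.\ by $\phi\mapsto\widehat{\phi}$. Hence $\NS(A^{n})^{\mathfrak{S}_n}=\{D_\lambda+E_\mu:\lambda,\mu\in\NS(A)\}$ with $(\lambda,\mu)$ uniquely determined, where $D_\lambda:=\sum_{i=1}^{n}\mathrm{pr}_i^{\ast}\lambda$ and $E_\mu:=s^{\ast}\mu-\sum_{i=1}^{n}\mathrm{pr}_i^{\ast}\mu$ (the latter being exactly the $\mathfrak{S}_n$-invariant classes lying in the mixed part, a symmetric homomorphism being $\phi_\mu$ for a unique $\mu$). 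Next I impose invariance under $g$: from $\mathrm{pr}_1\circ g=-s$, $\mathrm{pr}_i\circ g=\mathrm{pr}_i$ for $i\ge2$, $s\circ g=-\mathrm{pr}_1$ and $(-1)^{\ast}=\mathrm{id}$, a short computation of $g^{\ast}D_\lambda$ and $g^{\ast}E_\mu$ and a comparison of components in the decomposition above show that $g^{\ast}(D_\lambda+E_\mu)=D_\lambda+E_\mu$ is equivalent to the single relation $\lambda=2\mu$. Consequently
$$\NS(P_n(A))^{\mathfrak{S}_{n+1}}=\{\,D_{2\mu}+E_\mu:\mu\in\NS(A)\,\}=\Big\{\sum_{i=1}^{n}\mathrm{pr}_i^{\ast}\mu+s^{\ast}\mu:\mu\in\NS(A)\Big\}.$$

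Finally I would match this with the map in the statement: as already noted, $H\mapsto\sum_{i=0}^{n}\tau^{\ast}q_i^{\ast}H$ becomes $H\mapsto s^{\ast}H+\sum_{i=1}^{n}\mathrm{pr}_i^{\ast}H$, which is precisely the parametrization of $\NS(P_n(A))^{\mathfrak{S}_{n+1}}$ obtained above, with $\mu=H$; in particular the map is surjective onto the invariants. Injectivity is immediate, since its $\mathrm{pr}_1^{\ast}$-component equals $2H$ and $\NS(A)$ is torsion-free; hence the map is an isomorphism. The delicate point is the componentwise bookkeeping in the $g$-invariance step; the only genuine external input is the integral decomposition of $\NS(A^{n})$ together with the integral identification $\NS(A)\cong\Hom^{\mathrm{sym}}(A,\dA)$. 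One could instead pin down the rank cheaply via the étale quotient $P_n(A)_\circ\to S_n(A)_\circ$ and the isomorphism $K_n(A)_\circ\cong S_n(A)_\circ$ in diagram \eqref{eqn:Notations}, but that only controls the rational Néron–Severi group, whereas the explicit computation above gives the asserted isomorphism of lattices and in particular the primitivity of the image.
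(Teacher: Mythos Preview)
Your argument is correct and takes a genuinely different route from the paper's. The paper works upstairs on $A^{n+1}$: it uses that $\tau$ has a retract to see that $\tau^{\ast}\colon H^{2}(A^{n+1},\QQ)^{\fS_{n+1}}\to H^{2}(P_n(A),\QQ)^{\fS_{n+1}}$ is surjective, invokes the known splitting $H^{2}(A^{n+1},\QQ)^{\fS_{n+1}}\cong H^{2}(A,\QQ)\oplus\Lambda^{2}H^{1}(A,\QQ)$, and observes that the second summand equals $\Im(S^{\ast})$ and hence dies under $\tau^{\ast}$; the passage to $\NS$ is via Lefschetz $(1,1)$, and the argument is carried out over $\QQ$. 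You instead transport everything to $A^{n}\cong P_n(A)$ and compute directly with the integral product decomposition $\NS(A^{n})=\bigoplus_i\mathrm{pr}_i^{\ast}\NS(A)\oplus\bigoplus_{i<j}\Hom(A,\dA)_{ij}$, first extracting the $\fS_n$-invariants and then imposing invariance under the extra involution $g$; the resulting condition $\lambda=2\mu$ pins down the invariant lattice on the nose. Your approach is more computational but has the advantage of giving the integral isomorphism directly (in particular the saturation of the image), whereas the paper's reduction ``torsion-free, so it suffices to work over $\QQ$'' only immediately yields the rational statement; your calculation therefore supplies exactly the primitivity that the application in Proposition~\ref{prop:stableEx} needs (namely $c_1(F)=a\sum_i\tau^{\ast}q_i^{\ast}H$ with $a\in\ZZ$).
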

\begin{proof}
	We note that $P_n(A)$ is itself an abelian variety (isomorphic to $A^n$ via projection), and hence its integral cohomology is torsion free. This implies especially that its Neron-Severi group $\NS(P_n(A))$ is torsion free and hence so is $\NS(P_n(A))^{\mathfrak{S}_{n+1}}$.
	
	Furthermore by \cite[Lemma 3]{hashi} we have an isomorphism
	\begin{equation*}
		\NS(P_n(A))^{\mathfrak{S}_{n+1}}\otimes \mathbb{Q}\cong \left( \NS(P_n(A))\otimes\mathbb{Q}\right) ^{\mathfrak{S}_{n+1}}.
	\end{equation*}
 and so it is enough to prove the lemma over the field of rational numbers $\mathbb{Q}$.
 
We start with the morphisms
\begin{equation*}
	\begin{tikzcd}
		 P_n(A) \arrow[hookrightarrow]{r}{\tau} & A^{n+1} \arrow[r, "S"] & A 
	\end{tikzcd}
\end{equation*}
where $S=\sum\limits_{i=0}^n q_i$ is the summation morphism using the group law on $A$.

The natural inclusion $\tau$ has the following retract:
\begin{equation*}
	 A^{n+1} \rightarrow P_n(A),\,\,\,(a_0,\ldots,a_n)\mapsto (a_0,\ldots,a_{n-1},-\sum\limits_{i=0}^{n-1} a_i),
\end{equation*}
which shows that we have a surjection
\begin{equation*}
		\begin{tikzcd}
		H^2(A^{n+1},\mathbb{Q}) \arrow[r, "\tau^{*}"] & H^2(P_n(A),\mathbb{Q}) \arrow[r] & 0 
	\end{tikzcd}
\end{equation*}
As we work over $\mathbb{Q}$ and $\mathfrak{S}_{n+1}$ is finite we get an induced surjection:
\begin{equation*}
	\begin{tikzcd}
		H^2(A^{n+1},\mathbb{Q})^{\mathfrak{S}_{n+1}} \arrow[r, "\tau^{*}"] & H^2(P_n(A),\mathbb{Q})^{\mathfrak{S}_{n+1}} \arrow[r] & 0 .
	\end{tikzcd}
\end{equation*}
It is well known, see for example \cite[Theorem 2.15]{lehn}, that:
\begin{equation*}
	H^2(A^{n+1},\mathbb{Q})^{\mathfrak{S}_{n+1}}\cong H^2(A,\mathbb{Q})\oplus \Lambda^2 \left(H^1(A,\mathbb{Q}) \right) 
\end{equation*}
where the maps are given by:
\begin{equation*}
	H^2(A,\mathbb{Q}) \hookrightarrow H^2(A^{n+1},\mathbb{Q})^{\mathfrak{S}_{n+1}},\,\,\, c \mapsto \sum\limits_{i=0}^n q_i^{*}c 
\end{equation*}
as well as (using $\Lambda^2 \left(H^1(A,\mathbb{Q})\right)\cong H^2(A,\mathbb{Q})$):
\begin{equation*}
	\Lambda^2 \left(H^1(A,\mathbb{Q})\right) \hookrightarrow H^2(A^{n+1},\mathbb{Q})^{\mathfrak{S}_{n+1}},\,\,\, c\wedge d \mapsto  \sum\limits_{i,j} \left(q_i^{*}c \wedge q_j^{*}d\right)  
\end{equation*}
Now since $S=\sum\limits_{i=0}^n q_i$, similarly to Beauville in \cite[Proposition 8.]{beau}, we get:
\begin{equation*}
	\sum\limits_{i,j} \left(q_i^{*}c \wedge q_j^{*}d\right) =\left( \sum\limits_{i=0}^n q_i^{*}c\right) \wedge \left( \sum\limits_{j=0}^n q_j^{*}d\right)=S^{*}(c\wedge d).
\end{equation*}
This implies $\Lambda^2 \left(H^1(A,\mathbb{Q})\right)\cong \Im(S^{*})$. But then
\begin{equation*}
	\tau^{*}(S^{*}(c\wedge d))=(S\circ \tau)^{*}(c\wedge d)=0
\end{equation*}
which shows that we have
\begin{equation}
	H^2(P_n(A),\mathbb{Q})^{\mathfrak{S}_{n+1}}\cong \tau^{*}H^2(A,\mathbb{Q}).
\end{equation}
Using the Lefschetz $(1,1)$-theorem gives
\begin{equation*}
	\left( \NS(P_n(A))\otimes\mathbb{Q}\right)^{\mathfrak{S}_{n+1}}\cong \tau^{*}\left(\NS(A)\otimes \mathbb{Q}\right), 
\end{equation*}
which is what we wanted to prove.
\end{proof}

\begin{proposition}
	\label{prop:stableEx}
	The vector bundle $\mathcal{U}_{\da}$ defined in \eqref{eqn:U} is slope stable with respect to $H_K$.
\end{proposition}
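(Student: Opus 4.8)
The plan is to adapt to the present setting the strategy of Proposition~\ref{thm:boundary-stable} (the technique of Stapleton): pass from $\KA$ to the cover $P_n(A)$ via the operation $(\cdot)_P$. Applying the projection formula \eqref{eqn:MultiCover} both to $\mathcal{U}_{\da}$ and to an arbitrary reflexive subsheaf $F\subsetneq\mathcal{U}_{\da}$, one has $\mu_{H_K}(F)<\mu_{H_K}(\mathcal{U}_{\da})$ if and only if $\mu_{H_P}\big((F)_P\big)<\mu_{H_P}\big((\mathcal{U}_{\da})_P\big)$; moreover $(F)_P$ is then a nonzero $\mathfrak{S}_{n+1}$-invariant reflexive subsheaf of $(\mathcal{U}_{\da})_P$ of rank $<r=\rk\mathcal{U}_{\da}$, and every such subsheaf arises this way. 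So it suffices to show that $(\mathcal{U}_{\da})_P$ is not destabilized with respect to $H_P$ by any $\mathfrak{S}_{n+1}$-invariant reflexive subsheaf of rank $<r$.

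The first step is to compute $(\mathcal{U}_{\da})_P$. Restricting \eqref{exsequ} to $\KA_\circ$, pulling back along $\sigma_\circ$, and extending over the codimension-$2$ complement of $P_n(A)_\circ$ by reflexive hull, and using (as in the proof of Proposition~\ref{thm:boundary-stable}) that the tautological bundle $(\Pb_{\da}(H))^{(n)}$ corresponds under $(\cdot)_P$ to $\bigoplus_{j=0}^n\tau^{*}q_j^{*}\Pb_{\da}(H)$ together with the sum of the pulled-back evaluation maps, one obtains
\[
	(\mathcal{U}_{\da})_P \;=\; \ker\!\Big(\, V\otimes\mathcal{O}_{P_n(A)} \xrightarrow{\ \bigoplus_j e_j\ } \bigoplus_{j=0}^n L_j \,\Big), \qquad V:=H^0(A,\Pb_{\da}(H)),\ \ L_j:=\tau^{*}q_j^{*}\Pb_{\da}(H),
\]
where $e_j$ is the pullback of the evaluation map of $\Pb_{\da}(H)$. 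Hence $\rk(\mathcal{U}_{\da})_P=r$ and $c_1\big((\mathcal{U}_{\da})_P\big)=-H_P$ (consistent with Lemmas~\ref{lem:Chernclass} and~\ref{intersect} via \eqref{eqn:MultiCover}), so $\mu_{H_P}\big((\mathcal{U}_{\da})_P\big)=-H_P^{2n}/r$; and $(\mathcal{U}_{\da})_P$ is contained in each $M_j:=\ker e_j=\tau^{*}q_j^{*}M_{\Pb_{\da}(H)}$, the pullback to $P_n(A)$ of the syzygy bundle $M_{\Pb_{\da}(H)}$ of $\Pb_{\da}(H)$ on $A$.

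Now let $\mathcal{G}:=(F)_P$, a nonzero $\mathfrak{S}_{n+1}$-invariant reflexive subsheaf of $(\mathcal{U}_{\da})_P$ with $s:=\rk\mathcal{G}<r$. The decisive point is that $\det\mathcal{G}$ is an $\mathfrak{S}_{n+1}$-invariant line bundle, so by Lemma~\ref{equiv-NS} its numerical class lies in $\ZZ H_P$; write $c_1(\mathcal{G})=-mH_P$ with $m\in\ZZ$. I claim $m\geq 1$. Since $\mathcal{G}\subseteq V\otimes\mathcal{O}_{P_n(A)}$, which is $\mu_{H_P}$-semistable of slope $0$, we get $m\geq 0$; and $m=0$ is excluded, for instance because $\mathcal{G}\subseteq M_0=\tau^{*}q_0^{*}M_{\Pb_{\da}(H)}$, which by Proposition~\ref{prop:StableE} (applied to $M_{\Pb_{\da}(H)}$, slope stable on $A$ as the syzygy bundle of a sufficiently positive line bundle on an abelian surface, $H^2=2(n+r+1)$ being large) is $\mu_{H_P}$-stable of the strictly negative slope $-H_P^{2n}/\big((n+1)(n+r)\big)$; as $s<r\leq n+r$ this forces $\mu_{H_P}(\mathcal{G})<0$, i.e. $m>0$. (Alternatively, one checks $H^0(P_n(A),(\mathcal{U}_{\da})_P)\subseteq H^0(P_n(A),M_0)=\ker\big(V\to H^0(P_n(A),L_0)\big)=0$ since $q_0\circ\tau$ is surjective, which rules out an invariant subsheaf with numerically trivial determinant.) Consequently, using $1\leq s<r$ and $H_P^{2n}>0$ (Lemma~\ref{intersect}),
\[
	\mu_{H_P}(\mathcal{G}) \;=\; \frac{-mH_P^{2n}}{s} \;\leq\; \frac{-H_P^{2n}}{s} \;<\; \frac{-H_P^{2n}}{r} \;=\; \mu_{H_P}\big((\mathcal{U}_{\da})_P\big),
\]
so $(F)_P$ does not destabilize $(\mathcal{U}_{\da})_P$, and $\mathcal{U}_{\da}$ is $\mu_{H_K}$-stable.

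The main obstacle, and the reason this is not a verbatim copy of Proposition~\ref{thm:boundary-stable}, is that there $(E^{(n)})_P$ is a direct sum of stable bundles $E_i$ all of the same slope as $(E^{(n)})_P$, whereas here the natural stable bundles $M_j$ on $P_n(A)$ have slope \emph{strictly larger} than $\mu_{H_P}\big((\mathcal{U}_{\da})_P\big)$ (because $r<(n+1)(n+r)$). Thus merely sending a maximal-slope stable subsheaf of $(F)_P$ into some $M_j$ fails to bound its slope sharply enough, and one is forced to exploit the $\mathfrak{S}_{n+1}$-invariance through Lemma~\ref{equiv-NS} to pin $c_1\big((F)_P\big)$ to a nonzero negative multiple of $H_P$, after which the strict rank inequality $s<r$ concludes. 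Verifying the strict negativity $m\geq 1$ (equivalently, that $(\mathcal{U}_{\da})_P$ admits no invariant subsheaf of non-negative degree) is the only additional delicate point.
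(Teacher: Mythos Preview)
Your approach is essentially the paper's: reduce to $P_n(A)$ via $(\cdot)_P$, use Lemma~\ref{equiv-NS} to write $c_1((F)_P)$ as an integer multiple of $H_P$, and case-split on that integer. The cases $m\geq 1$ and $m<0$ (in your sign convention) are handled just as in the paper.

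The one place your argument is incomplete is the case $m=0$. Your primary route invokes the $\mu_H$-stability of the syzygy bundle $M_{\Pb_{\da}(H)}$ on $A$, so that Proposition~\ref{prop:StableE} yields $\mu_{H_P}$-stability of $M_0$ on $P_n(A)$. That stability happens to be true here (since $\NS(A)=\ZZ H$), but it is neither proved in the paper nor given a precise reference; ``sufficiently positive'' is not a proof, and in fact establishing it on $A$ already requires ruling out degree-$0$ subsheaves of $V\otimes\mathcal{O}_A$ via exactly the stable-subfactor trick below, so the detour through $M_{\Pb_{\da}(H)}$ saves nothing. Your parenthetical alternative correctly computes $H^0\big((\mathcal{U}_{\da})_P\big)=0$, but the claim that this ``rules out an invariant subsheaf with numerically trivial determinant'' is left unjustified: having no global sections does not by itself exclude a degree-$0$ subsheaf. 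The paper supplies precisely the missing step: choose a $\mu_{H_P}$-stable reflexive subsheaf $F'\subseteq (F)_P$ of maximal slope (so $\mu_{H_P}(F')\geq 0$), compose the inclusion into $V\otimes\mathcal{O}_{P_n(A)}$ with a nonzero projection to some summand $\mathcal{O}_{P_n(A)}$, and conclude from stability of both $F'$ and $\mathcal{O}_{P_n(A)}$ at slope $0$ (plus reflexivity) that $F'\cong\mathcal{O}_{P_n(A)}$, contradicting $H^0\big((\mathcal{U}_{\da})_P\big)=0$. With this argument inserted, your alternative coincides with the paper's proof.
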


\begin{proof}
	We follow the idea in the proof of \cite[Theorem 1.4]{stapleton}. 
	
	Since $j_K^{*}(-)$, $(h_\circ^{-1})^{*}(-)$ and $\sigma_\circ^\ast(-)$ are exact, and $(j_P)_{*}$ is left exact, by applying these functors to \eqref{exsequ} 
	we obtain an exact sequence of $\mathfrak{S}_{n+1}$-invariant reflexive sheaves on $P_n(A)$ as follows:
	$$ 0 \longrightarrow (\mathcal{U}_{\da})_{P} \longrightarrow (H^0(\Pb_{\da}(H))\otimes \mathcal{O}_{\KA})_{P} \stackrel{\varphi}{\longrightarrow} (\Pb_{\da}(H))^{(n)}_{P} $$
	where $\varphi$ is not necessarily surjective. It is clear that 
	$$ (H^0(\Pb_{\da}(H))\otimes \mathcal{O}_{\KA})_{P}=H^0(\Pb_{\da}(H))\otimes \mathcal{O}_{P_n(A)}, $$
	and we also have
	$$ ((\Pb_{\da}(H))^{(n)})_{P} = \bigoplus\limits_{i=0}^n \tau^{*} q_i^\ast \left( \Pb_{\da}(H)\right)  $$
	by a similar argument as in \cite[Lemma 1.1]{stapleton} (see also Proposition \ref{thm:boundary-stable}). Hence the above sequence becomes
	\begin{equation}
		\label{eqn:equisheaf}
		0 \longrightarrow (\mathcal{U}_{\da})_{P} \longrightarrow H^0(\Pb_{\da}(H))\otimes \mathcal{O}_{P_n(A)} \stackrel{\varphi}{\longrightarrow} \bigoplus\limits_{i=0}^n \tau^{*} q_i^\ast \left( \Pb_{\da}(H)\right)
	\end{equation}
	where $\varphi$ is the evaluation map on $P_n(A)_\circ$. 
	
	More precisely, for any set of closed points $(a_0,\ldots, a_n) \in P_n(A)$ with $a_i \neq a_j$, the morphism of fibers can be identified as
	\begin{align*}
		\varphi_{(a_0,\ldots, a_n)}: H^0(\Pb_{\da}(H)) &\longrightarrow \bigoplus\limits_{i=0}^n \left( \Pb_{\da}(H)\right) _{x_i} \\
		s &\longmapsto (s(a_0),\ldots, s(a_n))
	\end{align*}
	Since for any non-trivial $s \in  H^0(\Pb_{\da}(H))$, there are always (many choices of)  distinct points $(a_0,\ldots a_n) \in P_n(A)$ such that $(s(a_0),\ldots, s(a_n)) \neq (0,\ldots, 0)$, we conclude that the map of global sections
	$$ H^0(\varphi):  H^0(\Pb_{\da}(H)) \longrightarrow H^0(\bigoplus\limits_{i=0}^n \tau^{*} q_i^\ast \Pb_{\da}(H)) $$
	is injective. It follows by \eqref{eqn:equisheaf} that $H^0((\mathcal{U}_{\da})_{P}) = 0$.
	
	Note that $\varphi$ is surjective on $P_n(A)_\circ$, hence $\coker(\varphi)$ is supported on the big diagonal of $P_n(A)$ which is of codimension $2$. It follows that
	$$ c_1((\mathcal{U}_{\da})_{P}) = -\sum\limits_{i=0}^n \tau^{*} q_i^\ast H. $$
	
	We claim that $(\mathcal{U}_{\da})_{P}$ has no $\mathfrak{S}_{n+1}$-invariant subsheaf which is destabilizing with respect to $H_{P}$. Indeed, assume $F$ is an $\mathfrak{S}_{n+1}$-invariant subsheaf of $(\mathcal{U}_{\da})_{P}$, then $c_1(F)\in \NS(P_n(A))^{\mathfrak{S}_{n+1}}$ and thus by Lemma \ref{equiv-NS} we have:
		\begin{equation*}
			c_1(F) = a(\sum\limits_{i=0}^n \tau^{*}q_i^\ast H)\,\,\,\text{for some $a \in \mathbb{Z}$}.
		\end{equation*}

	If $a \leqslant -1$, then 
	\begin{equation*}
		c_1(F) \cdot H_P^{2n-1} \leqslant c_1((\mathcal{U}_{\da})_{P}) \cdot H_P^{2n-1} < 0
	\end{equation*}
	Since $1 \leqslant \rk(F) < \rk((\mathcal{U}_{\da})_{P})$, it follows that $\mu_{H_P}(F) < \mu_{H_P}((\mathcal{U}_{\da})_{P})$, hence $F$ is not destabilizing.
	
	If $a=0$, we choose a (not necessarily $\mathfrak{S}_{n+1}$-invariant) non-zero stable subsheaf $F' \subseteq F$ which has maximal slope with respect to $H_P$ (e.g. one can take a stable factor in the first Harder-Narasimhan factor of $F$). Without loss of generality, we can assume $F$ and $F'$ are both reflexive. Since $F'$ is also a subsheaf of the trivial bundle $H^0(\Pb_{\da}(H))\otimes \mathcal{O}_{P_n(A)}$, there must be a projection from $H^0(\Pb_{\da}(H))\otimes \mathcal{O}_{P_n(A)}$ to a certain direct summand of it, such that the composition of the embedding and projection $F' \rightarrow H^0(\Pb_{\da}(H))\otimes \mathcal{O}_{P_n(A)} \rightarrow \mathcal{O}_{P_n(A)}$ is non-zero. Since $\mu_{P_n(A)}(F') \geqslant \mu_{P_n(A)}(F) = 0 = \mu_{P_n(A)}(\mathcal{O}_{P_n(A)})$, and $\mathcal{O}_{P_n(A)}$ is also stable with respect to $H_{P}$, the map $F' \rightarrow\mathcal{O}_{P_n(A)}$ must be injective, and its cokernel is supported on a locus of codimension at least $2$. Since both are reflexive, we must have $F' = \mathcal{O}_{P_n(A)}$. Therefore $F$, and consequently $(\mathcal{U}_{\da})_{P}$, have non-trivial global sections. This yields a contradiction.
	
	If $a \geqslant 1$, $F$ would be a subsheaf of the trivial bundle $H^0(\Pb_{\da}(H))\otimes \mathcal{O}_{P_n(A)}$ of positive slope. This yields a contradiction.
	
	Finally, assume $G$ is a reflexive subsheaf of $\mathcal{U}_{\da}$. Then $(G)_{P}$ is an $\mathfrak{S}_{n+1}$-invariant reflexive subsheaf of $(\mathcal{U}_{\da})_{P}$. By the above claim we have $\mu_{H_P}((G)_{P}) < \mu_{H_P}((\mathcal{U}_{\da})_{P})$. It follows from equation \eqref{eqn:MultiCover} that $\mu_{H_K}(G) < \mu_{H_K}(\mathcal{U}_{\da})$. Therefore $\mathcal{U}_{\da}$ is slope stable with respect to $H_K$, as desired.
\end{proof}

\begin{proposition}\label{prop:sameH1}
	There exists some ample class $H' \in \NS(\KA)$ near $H_K$, such that $\mathcal{U}_{\da}$ is $\mu_{H'}$-stable for all $\da \in \dA$ simultaneously.
\end{proposition}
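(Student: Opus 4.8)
The plan is to follow the proof of Proposition \ref{prop:sameH2} line by line, with the family $\{\mathcal{U}_{\da}\}_{\da\in\dA}$ playing the role of $\{E^{(n)}\}_{[E]\in M_H(v)}$. In the proof of Proposition \ref{prop:sameH2} it was already shown that the Hilbert--Chow morphism $h\colon \KA\to S_n(A)$ is semismall, so that $H_K$ is lef and satisfies the hard Lefschetz property by \cite[Definition 2.1.3, Theorem 2.3.1]{dCM}; hence \cite[Proposition 4.8]{stapleton} already gives, for each individual $\da$, an ample class near $H_K$ with respect to which $\mathcal{U}_{\da}$ is $\mu$-stable (this also uses Proposition \ref{prop:stableEx}). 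To produce a single $H'$ valid for all $\da$ at once, I would set $\alpha := H_K^{2n-1}\in N_1(\KA)_\RR$ and, following \cite[Definition 3.1]{greb16}, form the convex closed set
$$ \overline{U} := \bigcap_{\da\in\dA}\SStab(\mathcal{U}_{\da}), $$
which contains $\alpha$ since each $\SStab(\mathcal{U}_{\da})$ does by Proposition \ref{prop:stableEx}. The goal is then to show that $\overline{U}$ has full dimension, that its interior $U$ is non-empty with $\alpha$ in its closure, and that $U\subseteq\bigcap_{\da}\Stab(\mathcal{U}_{\da})$; any ample $H'$ near $H_K$ with $(H')^{2n-1}\in U$ will then work, exactly as in \cite[Proposition 4.8]{stapleton}.

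The crucial point is a bound on the first Chern classes of destabilizing subsheaves, uniform over $\da$. Here the exact sequence \eqref{exsequ} is tailor-made for this: it realizes every $\mathcal{U}_{\da}$ as a subsheaf of $H^0(\Pb_{\da}(H))\otimes\mathcal{O}_{\KA}$. Since $\Pb_{\da}$ is algebraically trivial, $\Pb_{\da}(H)$ has the same (ample) N\'eron--Severi class as $H$, hence is ample with vanishing higher cohomology, so $h^0(\Pb_{\da}(H)) = \chi(\mathcal{O}_A(H)) = \tfrac12 H^2 =: N$ is independent of $\da$; thus each $\mathcal{U}_{\da}$ is a subsheaf of the fixed trivial bundle $\mathcal{O}_{\KA}^{\oplus N}$, with no need for a dualization or boundedness step as in Proposition \ref{prop:sameH2}. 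Moreover $c_1(\mathcal{U}_{\da}) = -H_K+\delta$ by Lemma \ref{lem:Chernclass} and $\rk\mathcal{U}_{\da}=r$, both independent of $\da$, so for any fixed curve class $\beta$ the slope $c := \mu_\beta(\mathcal{U}_{\da})$ does not depend on $\da$. Therefore, for $\beta\in\mathrm{Mov}(\KA)^\circ$, the set
$$ S := \{\, c_1(F)\mid F\subseteq\mathcal{U}_{\da}\text{ for some }\da\in\dA\text{ with }\mu_\beta(F)\geqslant c\,\} $$
is contained in $\{\, c_1(F)\mid F\subseteq\mathcal{O}_{\KA}^{\oplus N}\text{ with }\mu_\beta(F)\geqslant c\,\}$, which is finite by \cite[Theorem 2.29]{greb16}. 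Hence $S$ is finite, and the rest of the proof of \cite[Theorem 3.4]{greb16} applies verbatim: for every $\beta\in\mathrm{Mov}(\KA)^\circ$ there is $e\in\QQ^+$ with $\alpha+\varepsilon\beta\in\bigcap_{\da\in\dA}\Stab(\mathcal{U}_{\da})$ for all real $\varepsilon\in[0,e]$.

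The remaining steps are formal and parallel Proposition \ref{prop:sameH2}. If $\overline{U}$ were contained in a hyperplane $L\subset N_1(\KA)_\RR$, then choosing $\beta\in\mathrm{Mov}(\KA)^\circ\setminus L$ (possible since $\mathrm{Mov}(\KA)$ is full-dimensional) would put $\alpha+\varepsilon\beta\in\overline{U}\setminus L$ for small $\varepsilon>0$, a contradiction; so $\overline{U}$ is full-dimensional, its interior $U$ is non-empty and convex, and $\alpha = H_K^{2n-1}\in\overline{U}$. For $U\subseteq\bigcap_{\da}\Stab(\mathcal{U}_{\da})$: if some $\gamma_0\in U$ had $\mu_{\gamma_0}(F)=\mu_{\gamma_0}(\mathcal{U}_{\da_0})$ for a proper subsheaf $F\subseteq\mathcal{U}_{\da_0}$ with $\da_0\in\dA$, then since $\mu_\alpha(F)<\mu_\alpha(\mathcal{U}_{\da_0})$ by Proposition \ref{prop:stableEx} and slopes are affine-linear in the curve class, one finds a hyperplane through $\gamma_0$ across which $\mu_\gamma(\mathcal{U}_{\da_0})-\mu_\gamma(F)$ changes sign, so $F$ destabilizes $\mathcal{U}_{\da_0}$ on one side; since $U$ is open it meets that side, contradicting $U\subseteq\SStab(\mathcal{U}_{\da_0})$. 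Finally, choosing an ample $H'$ near $H_K$ with $(H')^{2n-1}\in U$ as in \cite[Proposition 4.8]{stapleton} finishes the proof. I expect the main obstacle to be, as in Proposition \ref{prop:sameH2}, the simultaneous form of the Greb--Toma perturbation theorem, that is the uniform finiteness of $S$; but here it should be easier, the only thing to check being that $h^0(\Pb_{\da}(H))$ and $c_1(\mathcal{U}_{\da})$ are genuinely independent of $\da$, which is immediate once the fixed ambient sheaf $\mathcal{O}_{\KA}^{\oplus N}$ has been identified.
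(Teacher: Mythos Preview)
Your proposal is correct and follows essentially the same approach as the paper: the paper's proof also says to repeat the argument of Proposition~\ref{prop:sameH2} verbatim, replacing Proposition~\ref{thm:boundary-stable} by Proposition~\ref{prop:stableEx}, with the only modification being the uniform finiteness of $S$, obtained exactly as you do by embedding every $\mathcal{U}_{\da}$ into the fixed trivial bundle $H^0(\Pb_{\da}(H))\otimes\mathcal{O}_{\KA}\cong\mathcal{O}_{\KA}^{\oplus(n+r+1)}$ via \eqref{exsequ} and then invoking \cite[Theorem 2.29]{greb16}. Your remark that this is simpler than in Proposition~\ref{prop:sameH2} (no dualization or boundedness argument needed) is apt.
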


\begin{proof}
By using Proposition \ref{prop:stableEx} instead of Proposition \ref{thm:boundary-stable}, the proof is literally the same as the proof of Proposition \ref{prop:sameH2}, except
that the step which shows that S is a finite set has to be modified. In this situation the slope $c := \mu_\beta(\mathcal{U}_{\da})$ is also independent of the choice of $\da\in \dA$ and we have
	$$ S := \{ c_1(F) \mid F \subseteq \mathcal{U}_{\da} \text{ for some } \da \in \dA \text{ such that } \mu_\beta(F) \geqslant c \}. $$
	Note that $V:=H^0(\Pb_{\da}(H))\cong \CC^{n+r+1}$ is independent of $\da\in \dA$. Since we have $\mathcal{U}_{\da} \subseteq V \otimes \mathcal{O}_{\KA}$ for all $\da \in \dA$ by \eqref{exsequ}, we obtain that $S$ is a subset of
	$$ T := \{ c_1(F) \mid F \subseteq V\otimes \mathcal{O}_{\KA} \text{ such that } \mu_\beta(F) \geqslant c \}, $$
	which is finite by \cite[Theorem 2.29]{greb16}, and hence so is $S$. This concludes the proof.
\end{proof}

\subsection{A component of the moduli space}
We start this subsection by remarking that the $\mathbb{P}^{n-1}$-functor $\Theta$ defined in \eqref{theta} induces an isomorphism of graded vector spaces
\begin{equation}\label{eqn:pnfunc}
	\Ext^{*}_{\KA}(\Theta(E),\Theta(F))\cong \Ext^{*}_A(E,F)\otimes H^{*}(\mathbb{P}^{n-1},\mathbb{C})
\end{equation}
for any $E, F\in \Db(A)$, see \cite[\S 2.1]{add16-2}.

%a brief digression to consider again the integral functor
%\begin{equation*}\label{eqn:defPhi}
%	\Theta \colon \Db(A) \longrightarrow \Db(\KA)
%\end{equation*}
%whose kernel is the universal ideal sheaf $\IZ$ on $A \times \KA$. Recall that $\Theta$ is a $\mathbb{P}^{n-1}$-functor, which implies by \cite[\S 2.1]{add16-2} that for any $E, F\in \Db(A)$ we have an isomorphism of graded vector spaces
%\begin{equation}\label{eqn:pnfunc}
%	\Ext^{*}_{\KA}(\Theta(E),\Theta(F))\cong \Ext^{*}_A(E,F)\otimes H^{*}(\mathbb{P}^{n-1},\mathbb{C}).
%\end{equation}

We now turn to the main result of this section. Let $H'$ be an ample class that satisfies Proposition \ref{prop:sameH1}, and let $\mathcal{M}$ the moduli space of $\mu_{H'}$-stable sheaves on $\KA$ with the same numerical invariants as $\mathcal{U}_{\da}$. Then the universal family $\mathcal{U}$ defines a classifying morphism
\begin{equation}\label{eqn:class}
	f \colon \dA \longrightarrow \mathcal{M}, \quad \da\longmapsto \left[ \mathcal{U}_{\da}\right] 
\end{equation}

In fact the morphism $f$ can be described as follows:

\begin{theorem}\label{thm:component1}
	The classifying morphism \eqref{eqn:class} defined by the family $\mathcal{U}$ identifies $\dA$ with a smooth connected component of $\mathcal{M}$.
\end{theorem}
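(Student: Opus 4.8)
The plan is to mimic the proof of Theorem \ref{thm:firstmain}: by \cite[Lemma 1.6.]{rz} it suffices to check that $f$ is injective on closed points and that the tangent spaces match, i.e.\ $\dim T_{[\mathcal{U}_{\da}]}\mathcal{M} = \dim T_{\da}\dA = 2$ for every $\da \in \dA$. Both points will be extracted from the $\mathbb{P}^{n-1}$-functor formula \eqref{eqn:pnfunc} applied to $E = \Pb_{\da}(H)$ and $F = \Pb_{\da'}(H)$, together with the identity $\mathcal{U}_{\da} = \Theta(\Pb_{\da}(H))$ from \eqref{eqn:U}.

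First I would compute $\Hom$ and $\Ext^1$ between the wrong-way fibers. Since $H^*(\mathbb{P}^{n-1},\mathbb{C})$ is concentrated in even degrees with one-dimensional pieces in degrees $0,2,\dots,2(n-1)$, the formula \eqref{eqn:pnfunc} gives
\begin{align*}
\Hom_{\KA}(\mathcal{U}_{\da},\mathcal{U}_{\da'}) &\cong \Hom_A(\Pb_{\da}(H),\Pb_{\da'}(H)), \\
\Ext^1_{\KA}(\mathcal{U}_{\da},\mathcal{U}_{\da'}) &\cong \Ext^1_A(\Pb_{\da}(H),\Pb_{\da'}(H)).
\end{align*}
Now $\Pb_{\da}(H)$ and $\Pb_{\da'}(H)$ are line bundles on the abelian surface $A$ differing by the degree-zero line bundle $\Pb_{\da'-\da}$, so $\Hom_A(\Pb_{\da}(H),\Pb_{\da'}(H)) = H^0(A,\Pb_{\da'-\da})$, which is $\CC$ if $\da = \da'$ and $0$ otherwise. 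This immediately shows that $f$ is injective on closed points (distinct $\da$ give non-isomorphic, hence non-equal, points of $\mathcal{M}$) and that the stable bundles $\mathcal{U}_{\da}$ are simple. For the tangent space, taking $\da = \da'$ gives $\Ext^1_A(\Pb_{\da}(H),\Pb_{\da}(H)) = H^1(A,\mathcal{O}_A) \cong \CC^2$, hence $\dim T_{[\mathcal{U}_{\da}]}\mathcal{M} = \dim \Ext^1_{\KA}(\mathcal{U}_{\da},\mathcal{U}_{\da}) = 2 = \dim \dA$. Combining these via \cite[Lemma 1.6.]{rz} yields that $f$ is an open immersion onto a union of connected components; since $\dA$ is connected, its image is a single smooth connected component of $\mathcal{M}$.

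The one point requiring a little care — and the only real obstacle — is making sure \eqref{eqn:pnfunc} is applicable in the form needed: the cited isomorphism from \cite[\S 2.1]{add16-2} is a statement about $\mathbb{P}$-functors between derived categories, and one must confirm that for the honest sheaves $\Pb_{\da}(H)$ (viewed as objects of $\Db(A)$ in degree zero) the graded isomorphism indeed recovers the sheaf-level $\Hom$ and $\Ext^1$ on $\KA$ in degrees $0$ and $1$ respectively, with no contribution from the higher cohomology of $\mathbb{P}^{n-1}$ (which only affects degrees $\geqslant 2$) and no contribution from negative $\Ext_A$ (which vanish since $\Pb_{\da}(H)$ is a sheaf). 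This is exactly parallel to how \cite[Theorem 6.9]{meac} was used in the proof of Theorem \ref{thm:firstmain}, so I would simply invoke it in the same spirit. Everything else is a direct computation on the abelian surface $A$.
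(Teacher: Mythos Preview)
Your proposal is correct and follows essentially the same approach as the paper: both use the identity $\mathcal{U}_{\da}=\Theta(\Pb_{\da}(H))$ together with the $\mathbb{P}^{n-1}$-functor formula \eqref{eqn:pnfunc} to compute $\Hom$ and $\Ext^1$ on $\KA$ in terms of $\Hom$ and $\Ext^1$ on $A$, then conclude via \cite[Lemma 1.6.]{rz}. The only cosmetic differences are that the paper cites \cite[Lemma 9.9]{huy} for the vanishing $H^0(A,\Pb_{\da_1}^\vee\otimes\Pb_{\da_2})=0$ and identifies $\Ext^1_A(\Pb_{\da}(H),\Pb_{\da}(H))$ with $T_{\da}\dA$ via the Fourier--Mukai equivalence rather than directly as $H^1(A,\cO_A)\cong\CC^2$; your justification of why only the degree-$0$ piece of $H^*(\mathbb{P}^{n-1},\CC)$ contributes in degrees $0$ and $1$ is a welcome point of clarity.
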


\begin{proof}
%	By \cite[Lemma 1.6.]{rz} we have to prove that $f$ is injective on closed points and that $\dim(T_{[\mathcal{U}_{\da}]}\mathcal{M})=2$ for all $\da\in \dA$.
	
	We know $\mathcal{U}_{\da}=\Theta(\Pb_{\da}(H))$, so for $\da_1\neq \da_2$ we find by \eqref{eqn:pnfunc} that
	\begin{equation*}
		\Hom_{\KA}(\mathcal{U}_{\da_1},\mathcal{U}_{\da_2})\cong\Hom_A(\Pb_{\da_1}(H),\Pb_{\da_2}(H))\cong H^0(A,\Pb_{\da_1}^{\vee}\otimes\Pb_{\da_2})=0,
	\end{equation*}
	where the last step uses \cite[Lemma 9.9]{huy}.
	
	A similar computation shows
	\begin{align*}
		\Ext^1_{\KA}(\mathcal{U}_{\da},\mathcal{U}_{\da})&\cong\Ext^1_A(\Pb_{\da}(H),\Pb_{\da}(H))
	\cong\Ext^1_{\dA}(\mathcal{O}_{\da},\mathcal{O}_{\da}) \cong T_{\da}\dA.
	\end{align*}
	Here we use 	$\Ext^1_A(\Pb_{\da}(H),\Pb_{\da}(H))\cong\Ext^1_A(\Pb_{\da},\Pb_{\da})$, $\Pb_{\da}\cong\widehat{\Phi}(\mathcal{O}_{\da})$ and the fact that $\widehat{\Phi}$ is an equivalence from $\Db(\dA)$ to $\Db(A)$.
	
	These computations imply that $f$ is injective on closed points and that we have $\dim(T_{[\mathcal{U}_{\da}]}\mathcal{M})=2$ for all $\da\in \dA$. The claim then follows from an argument similar to the proof of Theorem \ref{thm:firstmain}. 
%	Using $T_{[\mathcal{U}_{\da}]}\mathcal{M}\cong \Ext^1_{\KA}(\mathcal{U}_{\da},\mathcal{U}_{\da})$ we find $\dim(T_{[\mathcal{U}_{\da}]}\mathcal{M})=2$ as desired.
\end{proof}

\end{document}